\documentclass[12pt,reqno]{amsart}

\usepackage{amssymb, a4wide, amsmath,amsfonts,latexsym, mathtools,xy}
\usepackage[T1]{fontenc}
\usepackage[utf8]{inputenc}
\usepackage{graphicx}
\usepackage{xspace}
\xyoption{all}

\usepackage[usenames]{xcolor}

\makeatletter
\DeclareFontEncoding{LS2}{}{\@noaccents}
\makeatother
\DeclareFontSubstitution{LS2}{stix}{m}{n}

\DeclareSymbolFont{largesymbolsstix}{LS2}{stixex}{m}{n}

\DeclareMathDelimiter{\lBrace}{\mathopen}{largesymbolsstix}{"E8}{largesymbolsstix}{"0E}
\DeclareMathDelimiter{\rBrace}{\mathclose}{largesymbolsstix}{"E9}{largesymbolsstix}{"0F}

\usepackage{hyperref}

\newtheorem{definition}{Definition}[section]
\newtheorem{theorem}[definition]{Theorem}
\newtheorem{proposition}[definition]{Proposition}

\newtheorem{lemma}[definition]{Lemma}
\newtheorem{example}[definition]{Example}
\newtheorem{remark}[definition]{Remark}

\def\lra{\longrightarrow}

\DeclareMathOperator{\A}{\mathsf {A}}
\DeclareMathOperator{\B}{\mathsf {B}}
\DeclareMathOperator{\C}{\mathsf {C}}
\DeclareMathOperator{\F}{\mathsf {F}}
\DeclareMathOperator{\Z}{\mathcal{Z}}
\DeclareMathOperator{\Vect}{\mathsf{Vect}}
\DeclareMathOperator{\AWB}{\mathsf{AWB}}

\DeclareMathOperator{\TAWB}{\mathsf{TAWB}}
\DeclareMathOperator{\As}{\mathsf{Ass}}
\DeclareMathOperator{\XAWB}{\mathsf{XAWB}}
\DeclareMathOperator{\ab}{\mathrm{ab}}
\DeclareMathOperator{\as}{\mathrm{ass}}
\DeclareMathOperator{\M}{\mathsf {M}}
\DeclareMathOperator{\N}{\mathsf {N}}

\newcommand{\awb} {{\footnotesize \textup{AWB}}\xspace}
\newcommand{\awbs} {{\footnotesize {AWB}}s\xspace} 
\newcommand{\Hoch}{\textup{Hoch}}

\newcommand{\re}{\mathsf R}
\newcommand{\fe}{\mathsf F}
\newcommand{\pe}{\mathsf P}

\newcommand{\K}{\mathbb{K}}
\newcommand{\Ker}{\mathsf{Ker}}
\newcommand{\Coker}{\mathsf{Coker}}
\newcommand{\Ima}{\mathsf{Im}}
\newcommand{\id}{\mathsf{Id}}
\newcommand{\inc}{\mathsf{inc}}

\title[A non-abelian tensor product  of algebras with
bracket]{A non-abelian tensor product  of algebras with
	bracket}

\author[J. M. Casas]{Jos\'e Manuel Casas\textsuperscript{1}}
\address{\textsuperscript{1}Departamento de Matem\'atica Aplicada I \& CITMAga, E. E. Forestal,  University of Vigo, 36005 Pontevedra, Spain}
\email{jmcasas@uvigo.es \ (ORCID: 0000-0002-6556-6131)}

\author[E. Khmaladze]{Emzar Khmaladze\textsuperscript{2}}
\address{\textsuperscript{2}The University of Georgia, Kostava St. 77a, 0171 Tbilisi, Georgia \& A. Razmadze Mathematical Institute of Tbilisi State University,
	Tamarashvili St. 6, 0177 Tbilisi, Georgia}
\email{e.khmaladze@ug.edu.ge \ (ORCID: 0000-0001-9492-982X)}

\author[M. Ladra]{Manuel Ladra\textsuperscript{3}}
\address{\textsuperscript{3}Departamento de Matem\'aticas \& CITMAga, Universidade de Santiago de Compostela, 15782 Santiago de Compostela, Spain}
\email{manuel.ladra@usc.es \ (ORCID: 0000-0002-0543-4508)}

\subjclass{16E40, 16E99, 16W99, 16B50}

\keywords{Algebras with bracket, associative algebras, non-abelian tensor product, crossed modules, Hochschild homology}

\numberwithin{equation}{section}

\begin{document}

\begin{abstract}
	We introduce and study a non-abelian tensor product of  two algebras with bracket with compatible actions on each other. We investigate its applications to the universal central extensions and the low-dimensional homology of perfect algebras with bracket. 
\end{abstract}

\maketitle
\section{Introduction}

The non-abelian tensor product in various algebraic categories, such  as groups, (Hom)-Lie and (Hom)-Leibniz algebras, Lie superalgebras. etc., plays an essential role in the study of homotopy theory, low-dimensional homology or universal central extensions (see \cite{BL, CKP, El2, El1, GKL, Gu1, In1, In2}). It has also been used in the construction of low-dimensional non-abelian homology of groups, Lie algebras  and Leibniz algebras, having interesting applications to the algebraic $K$-theory, cyclic homology and Hochschild homology, respectively \cite{Gu1, Gu2, Gn}. 

%All these above-mentioned categories are examples of semi-abelian categories \cite{BB, JMT}. 
%The paper \cite{MT}  explains how internal crossed squares can be used to set up an intrinsic approach to the non-abelian tensor product in the context of a %semi-abelian category.
%The non-abelian tensor products form the internal crossed squares in all the above-mentioned semi-abelian categories. 
%This is because the so-called ``Smith is Huq'' condition \cite{MFTV} is fulfilled in these categories, which is a sufficient condition to construct the %non-abelian tensor product of two objects acting compatibly on each other. 

% In this paper, we choose to develop a non-abelian tensor product in the category of algebras with bracket, which is an example of a semi-abelian category that  %does not satisfy the 'Smith is Huq' condition. This makes our paper more interesting, because the categorical approach from \cite{MT} is useless for constructing %the non-abelian tensor product of algebras with bracket, although it can be done with generators and relations, as we do.

In this paper, we choose to develop a non-abelian tensor product for a relatively new algebraic structure
called algebra with bracket, introduced  in~\cite{CP} as a kind of generalization of the (non-commutative) Poisson algebra. It should be noted here that such a generalization of Poisson algebras originates in physics literature (see, e.g.~\cite{Ka}). 
Among other results in ~\cite{CP},  Quillen cohomology of algebras with bracket is described via an explicit cochain complex. 
Further (co)homological investigations of algebras with bracket are carried out in \cite{Ca, CKL}. 
In particular, for our importance, we mention that in \cite{Ca}, a homology with trivial coefficients of algebras with bracket is developed with applications to universal central extensions. 
In \cite{CKL}, crossed modules for algebras with bracket are introduced, and the second cohomology is interpreted as the set of equivalence classes of crossed extensions. The eight-term exact cohomology sequence is also constructed.

In the present paper, we continue the same line of homological study of algebras with bracket. We fit the homology with trivial coefficients \cite{Ca} into the context of Quillen homology, introduce the non-abelian tensor product of algebras with bracket by generators and relations, and give applications in universal central extensions and low-dimensional homology.

%The category of algebras with bracket is an example of a semi-abelian category that  does not satisfy the ``Smith is Huq'' condition. This makes our current %research more interesting because the categorical approach from~\cite{MT} is useless for constructing the non-abelian tensor product of algebras with bracket, %although it can be done with generators and relations, as we do.

The organization of this paper is as follows: after the introduction, in Section~\ref{S:AWB}, we present some definitions and results for the development of the paper.
 We briefly recall the construction of homology for algebras with bracket from~\cite{Ca, CP} and prove that it is consistent with the context of 
 Quillen's homology theory (Theorem~\ref{T:Quillen}). In Section~\ref{S:CMAWB}, we present all the ingredients for developing the non-abelian tensor product later.
  In particular, we define actions,  semi-direct products and crossed modules of algebras with bracket. 
  Additionally, we show that the category of crossed modules is equivalent to the category of cat$^1$-algebras with bracket (Theorem~\ref{equiv}).
   Section~\ref{non-ab} contains the main results of the paper. Here, we present the construction of the non-abelian tensor product of
    two algebras with bracket acting compatibly on each other (Proposition~\ref{structure}) and  study its properties.
     Regarding trivial actions, we  describe  the non-abelian tensor product (Proposition~\ref{P:trivial_action}). 
      We  establish a right-exactness property of the non-abelian tensor product of algebras with bracket (Theorem~\ref{T:right_exactness}) and 
  equip the non-abelian tensor product with crossed module structures (Proposition~\ref{maps_phi_psi}).
     Finally,  as an application, for a given perfect algebra with bracket, we construct its universal central extension
 (Theorem~\ref{T:awb_uce}) and  a four-term exact homology sequence (Theorem~\ref{T:4_term}).

\section{ Algebras With Bracket} \label{S:AWB}
Throughout the paper we  fix a ground  field $\mathbb{K}$. All vector spaces and algebras are $\mathbb{K}$-vector spaces and $\mathbb{K}$-algebras, and linear maps are $\mathbb{K}$-linear maps as well. In what follows $\otimes$ means $\otimes_{\mathbb{K}}$.

 \subsection{Basic definitions} \label{Basic}
 
\begin{definition}[\cite{CP}] An  algebra with bracket, or an \awb  for short,  is  an associative (not necessarily commutative) algebra ${\A}$
	equipped with a bilinear map (bracket operation) $[-,-] \colon  {\A} \times {\A} \to {\A}$, $(a, b)\mapsto [a,b]$ satisfying the following identity:
	\begin{equation}\label{awbequa}
		[a  b,c] = [a,c]  b + a  [b,c]
	\end{equation}
	for all $a, b, c \in {\A}$.
\end{definition}

\emph{A homomorphism} of \awbs is a homomorphism of associative algebras preserving the bracket operation.
We denote by $\AWB$ the respective category of \awbs.

The category $\AWB$ is a variety of $\Omega$-groups \cite{Hig}, and therefore it is a semi-abelian category~\cite{BB, JMT}: pointed, exact and protomodular with binary coproducts. So classical lemmas such as the Five Lemma \cite{Mi} hold for \awbs which we will use later on.
Below we give the definitions of ideal, center,
		commutator, action and semi-direct product of \awbs, and of course
		these notions agree with the corresponding general notions in the context of semi-abelian
		categories.
		
%	At the same time, we note here that $\AWB$ is an example of a semi-abelian category, where the so-called \emph{Smith is Huq} condition is not fulfilled %\cite{MFTV}. It was used in \cite{MT} as a sufficient condition to be able to define the non-abelian tensor product of the given pair of compatible actions in a %semi-abelian category.}

 We now list some common examples of \awbs that will be discussed later. Other examples can be found in \cite{Ca1, CKL, CP, Ka}.

\begin{example}\label{ejemplos}\
	
	\begin{itemize}
		\item[(i)]  Any vector space $\A$ enriched with the trivial multiplication and bracket operation, 
		i.e. $ab=0$ and $[a,b]=0$ for all $a,b\in \A$,  is an \awb, called an abelian \awb.
		 Hence, the category of vector spaces is a full subcategory of $\AWB$ and the respective inclusion functor $ \Vect \hookrightarrow \AWB$ has a left adjoint, the so called abelianization functor  $(-)^{\ab} \colon \AWB \to  \Vect$, which will be described  in Remark~\ref{remark ab as} (i) below.
		
			\item[(ii)]  Any associative algebra $\A$ together with the trivial bracket operation can be regarded as an \awb.  This defines the inclusion functor $I \colon \As \to \AWB$, where $\As$ denotes the category of associative algebras. The functor $I$ has a left adjoint $(-)^{\as} \colon \AWB\to \As$ described in Remark~\ref{remark ab as} (ii) below.
			
			\item[(iii)] Another way of considering an associative algebra $\A$ as an \awb is to define the bracket operation by
			\[
			[a,b]\coloneqq ab - ba, \quad a, b\in \A.
			\]
			This particular \awb is called the \emph{tautological} \awb associated to the associative algebra ${\A}$ and will be denoted by $T(\A)$.
			%Let us note that this last bracket also defines a Lie algebra structure on the underlying vector space of $\A$.
			
		Tautological \awbs constitute a full subcategory of $\AWB$ denoted by $\TAWB$. The correspondence $T \colon \As \to \TAWB $, $	{\A} \mapsto T(\A)$, is functorial, and it establishes an isomorphism between the categories  $\TAWB$ and $\As$.

		\item[(iv)]  Any Poisson algebra is an \awb. In fact, the category $\mathsf {Poiss}$ of (non-commutative) Poisson algebras is a subcategory of $\AWB$. The inclusion functor $\mathsf {Poiss} \hookrightarrow \AWB$ has as left adjoint the functor given by
		${\A} \mapsto {\A}_{\mathsf {Poiss}}$, where ${\A}_{\mathsf {Poiss}}$ is the maximal quotient of ${\A}$, such that the following relations hold:  $[a,a]\sim 0$ and $[a,[b,c]]+[b,[c,a]]+[c,[a,b]] \sim 0$.

	\end{itemize}
\end{example}

 The following notions for \awbs are given in \cite{Ca} and they agree with the corresponding  notions in semi-abelian categories. 
\emph{A subalgebra} ${\B}$ of an \awb ${\A}$ is a vector subspace  which is closed under the product and the bracket operation, that is, ${\B}~{\B} \subseteq
{\B}$ and $[{\B},{\B}] \subseteq {\B}$.
A subalgebra ${\B}$ is said to be a  \emph{left (respectively, right) ideal} if  ${\A}~{\B} \subseteq {\B}$, $[{\A},{\B}] \subseteq {\B}$ (respectively,
${\B}~{\A} \subseteq {\B}$, $[{\B},{\A}] \subseteq {\B}$).
If  ${\B}$ is both left and right ideal, then it is said to be  a \emph{two-sided ideal}. In this case, the quotient
${\A}/{\B}$ is endowed with an \awb structure  naturally induced from the operations on ${\A}$.

Let $\A$ be an \awb and ${\B}, {\C}$ be two-sided ideals of ${\A}$. The \emph{commutator ideal} of ${\B}$ and ${\C}$ is the two-sided ideal of $\A$ 
\[
[[{\B},{\C}]] = \langle \{bc, cb, [b,c], [c,b] \mid b \in {\B}, c \in {\C}
\} \rangle.
\]
Obviously $[[{\B}, {\C}]] \subseteq {\B} \bigcap {\C}$. In the particular case $ {\B} = {\C} = {\A}$, one obtains the definition of derived algebra of ${\A}$,
\[
[[{\A},{\A}]]=\langle \{aa', [a,a'] \mid a, a' \in {\A} \} \rangle.
\]
An \awb ${\A}$ is said to be \emph{perfect} if ${\A} =[[{\A},{\A}]]$.

%An \emph{abelian} \awb ${\A}$ is an \awb with trivial product and bracket operation, that is, ${\A} ~ {\A} = 0 = [{\A},{\A}]$, so just a vector space.

	\begin{remark}\label{remark ab as} \hfill
\begin{itemize} 
\item[(i)] Given an \awb $\A$, the quotient ${\A}/[[{\A},{\A}]] $ is always an abelian \awb and will be denoted by $\A^{\ab}$.
	Abelian \awbs (i.e. just vector spaces) are abelian group objects in the category $\AWB$. The respective abelianization functor $(-)^{\ab} \colon \AWB \to  \Vect$, which is left adjoint to the inclusion functor $ \Vect \hookrightarrow \AWB$, sends an \awb $\A$ into $\A^{\ab}={\A}/[[{\A},{\A}]]$.
	\item[(ii)] To an \awb $\A$, we associate the associative algebra  
	\[ 
	\A^{\as}= \A / [\A,\A], \quad \text{where} \quad [{\A},{\A}]=\langle \{ [a,a'] \mid a, a' \in {\A} \} \rangle .
	\] 
	This correspondence is functorial and satisfies the following universal property: given an associative algebra $\B$, any homomorphism of \awbs $\A \to I(\B)$ factors trough $\A^{\as}$, where $I \colon \As\hookrightarrow \AWB$ is the inclusion functor as in Example~\ref{ejemplos} (iii). Thus, the functor $(-)^{\as} \colon \AWB\to \As$, $\A\mapsto \A^{\as}$, is left adjoint to $I$.
	\end{itemize}
\end{remark}

The \emph{center} of an \awb ${\A}$ is the two-sided ideal
\[
{\Z}({\A}) =  \{a \in {\A} \mid ab
= 0 = ba,[a,b] = 0 = [b,a], \ \text{for  all}\ b \in {\A}\}.
\]
Note that an \awb ${\A}$ is abelian if and only if ${\A} = {\Z}({\A})$.

\emph{A central extension} of an \awb $\A$ is an exact sequence of \awbs $0 \to {\M} \to {\B} \xrightarrow{\phi}  {\A} \to 0$  such that $[[{\M}, {\B}]] =0$ (equivalently, ${\M} \subseteq {\Z}({\B})$). It is said to be \emph{universal central extension} if for every central extension $0 \to {\N} \to {\C} \xrightarrow{\psi}  {\A} \to 0$ there is a unique homomorphism of \awbs $\alpha \colon {\B} \to {\C}$ such that $\psi \circ \alpha = \phi$. 

\begin{theorem}[\cite{Ca}] \label{uce} 
	An \awb ${\A}$ admits a universal central extension if and only if ${\A}$ is perfect. Moreover, the kernel of the universal central extension is isomorphic   to the first homology of $\A$, $\mathsf H_1^{\awb}(\A)$ (see the definition below).
\end{theorem}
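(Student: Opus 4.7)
The plan is to follow the classical pattern for universal central extensions in a semi-abelian variety (as used for groups, Lie and Leibniz algebras), adapted to the generators and relations that define an \awb. For necessity, I would argue by contradiction via a standard shift: assume a universal central extension $0 \to \M \to \B \xrightarrow{\phi} \A \to 0$ exists but $\A^{\ab} \neq 0$. Form the product \awb $\A \oplus \A^{\ab}$ with the operations inherited from $\A$ on the first factor and trivial on $\A^{\ab}$, so that $\A^{\ab}$ sits as a central ideal and the first projection is a central extension of $\A$. Then the two homomorphisms $\B \to \A \oplus \A^{\ab}$ given by $b \mapsto (\phi(b), 0)$ and $b \mapsto (\phi(b), \bar\pi(\phi(b)))$, with $\bar\pi \colon \A \to \A^{\ab}$ the canonical quotient, are both lifts of $\phi$ and are distinct, contradicting uniqueness. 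Hence $\A$ must be perfect.

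For sufficiency, I would choose a free presentation $0 \to \mathsf R \to \mathsf F \xrightarrow{p} \A \to 0$ in $\AWB$ and define $\mathsf{uce}(\A) = [[\mathsf F, \mathsf F]]/[[\mathsf R, \mathsf F]]$, together with the map $u \colon \mathsf{uce}(\A) \to \A$ induced by $p$. Its kernel is $(\mathsf R \cap [[\mathsf F, \mathsf F]])/[[\mathsf R, \mathsf F]]$, which lies in the center of $\mathsf{uce}(\A)$ by construction. Perfectness of $\A$ forces $p\bigl([[\mathsf F, \mathsf F]]\bigr) = \A$, and a short diagram chase using the Five Lemma shows that $\mathsf{uce}(\A)$ is itself perfect. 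For universality, given any central extension $\psi \colon \C \to \A$ with kernel $\N$, freeness of $\mathsf F$ produces a lift $\tilde p \colon \mathsf F \to \C$ of $p$. Its restriction to $[[\mathsf F, \mathsf F]]$ carries $[[\mathsf R, \mathsf F]]$ into $[[\N, \C]] = 0$ (by centrality of $\N$), so it descends to a well-defined homomorphism $\alpha \colon \mathsf{uce}(\A) \to \C$ over $\A$.

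Uniqueness of $\alpha$ then follows from perfectness of $\mathsf{uce}(\A)$: any two such lifts differ by a map into the central ideal $\N$, and any such map must annihilate every generator of the form $xy$ or $[x,y]$, hence be zero. For the ``moreover'' statement, I would invoke Theorem~\ref{T:Quillen} to identify $\mathsf H^{\awb}_1$ with the first Quillen-derived functor of abelianization, whose standard Hopf-type expression yields $\mathsf H^{\awb}_1(\A) \cong (\mathsf R \cap [[\mathsf F, \mathsf F]])/[[\mathsf R, \mathsf F]] = \ker u$, together with independence from the choice of free presentation. The main technical obstacle I anticipate is verifying this Hopf-type formula rigorously inside $\AWB$: one must check that the commutator ideal $[[-,-]]$ (which mixes associative multiplication and the bracket operation) behaves correctly under the Quillen machinery, and in particular that the two-sided ideal generated by products $rf, fr, [r,f], [f,r]$ captures precisely the image of the relevant boundary from a cofibrant resolution. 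Once that is in place, the remaining steps are routine adaptations of the Lie- or Leibniz-algebra argument.
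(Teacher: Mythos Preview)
The paper does not actually prove this theorem; it is quoted from \cite{Ca} without argument. Your outline is correct and, judging from the information the paper itself records about \cite{Ca}, it is essentially the same approach taken there: the universal central extension is built as $[[\mathsf F,\mathsf F]]/[[\mathsf R,\mathsf F]]$ from a free presentation $0\to\mathsf R\to\mathsf F\to\A\to 0$ (this construction is recalled explicitly after Theorem~\ref{T:awb_uce}), and the kernel is identified with $(\mathsf R\cap[[\mathsf F,\mathsf F]])/[[\mathsf R,\mathsf F]]$, which the paper quotes as \cite[Corollary~2.14]{Ca} just before Theorem~\ref{T:Quillen}. Your necessity argument via the two distinct lifts to $\A\oplus\A^{\ab}$ and your uniqueness argument via perfectness are the standard ones and go through unchanged in $\AWB$.

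The one place where you take a different (and unnecessarily harder) route is the ``moreover'' clause. You propose to invoke Theorem~\ref{T:Quillen} and then extract a Hopf-type formula from the Quillen derived functors, and you flag this as the main technical obstacle. In fact \cite{Ca} obtains the isomorphism $\mathsf H^{\awb}_1(\A)\cong(\mathsf R\cap[[\mathsf F,\mathsf F]])/[[\mathsf R,\mathsf F]]$ directly from the explicit chain complex $\big(C_*^{\awb}(\A),d_*\big)$, using only that $\mathsf H_n^{\awb}$ vanishes on free \awbs in positive degrees. No passage through the Quillen machinery is needed, and the mixed commutator ideal $[[-,-]]$ appears simply as the image of $d_1$; so the obstacle you anticipate does not arise.
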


\subsection{Homology}
In this subsection, we briefly review the homology of \awbs with trivial coefficients given in \cite{Ca, CP}.

Let $V$ be a vector space. Let ${R}_1(V)=V$ and ${R}_n(V)=V^{\otimes n}\oplus V^{\otimes n}$, if $n\geq 2$. In order to distinguish elements from these tensor powers, we let $a_1\otimes \cdots \otimes  a_n$ be a typical element from the first component  of
${R}_n(V)$, while $a_1\circ \cdots \circ a_n$ from the second component of ${R}_n(V)$.

Given an \awb  ${\A}$,  
we let $\left ( C_*^{\awb}({\A}) , d_* \right )$ be the chain complex defined by
\[C_n^{\awb}({\A}) \coloneqq R_{n+1}({\A}), \ n \geq 0,\]
with the boundary maps $d_n \colon C_n^{\awb}({\A})\to C_{n-1}^{\awb}({\A})$,  $n \geq 0$, given by
\begin{align*}
	& d_{n}(a_1\otimes \dots \otimes a_{n+1}) = \sum_{i=1}^{n} (-1)^{i+1}
	a_1 \otimes \dots \otimes a_i  a_{i+1} \otimes \dots \otimes a_{n+1}, \\
	& d_{n}(a_1\circ \cdots \circ a_{n+1}) = \sum_{i=1}^{n}
	a_1 \otimes \dots \otimes  [a_i , a_{n+1}] \otimes  \dots \otimes
	a_{n} + \sum_{i=1}^{n-1} (-1)^i a_1\circ \cdots \circ a_i a_{i+1} \circ \dots \circ a_{n+1}
\end{align*}

The homology of the complex $\left ( C_*^{\awb}({\A}) , d_* \right )$ is called the \emph{homology with trivial
	coefficients} of the \awb ${\A}$ and it is denoted  by  ${\mathsf H_*^{\awb}}({\A})$.

Easy computations show that there is an isomorphism
\[
{\mathsf H_0^{\awb}}({\A}) \cong {\A}/[[{\A},{\A}]].
\]
On the other hand, given a free presentation of $\A$, that is, a short exact sequence of \awbs $0 \to {\re} \to {\fe} \to {\A} \to 0$, where $\fe$ is a free \awb, then there is an isomorphism 
\[
{\mathsf H^{\awb}_1}({\A}) \cong ({\re} \cap [[{\fe},{\fe}]])/[[{\re},{\fe}]]
\] 
(see \cite[Corollary 2.14]{Ca}).

%%%%%%%%%%%%%%

	\begin{remark}
	 Let $\A$ be an associative algebra and consider the ground field $\K$ as a trivial $\A$-bimodule. Let $C_*^{\Hoch}\!({\A}) = C_*^{\Hoch}\!(\A, \K)$ and ${\Hoch}_*\!({\A})={\Hoch}_*\!({\A}, \K)$ denote the Hochschild complex and the Hochschild homology of $\A$ with coefficients in $\K$ \cite{We}, respectively. Then
	 $C_{1}^{\Hoch}\!({\A})=\A=C_0^{\awb}(T(\A))$ and
	  the  natural injections
\[
C_{n+1}^{\Hoch}\!({\A}) = \A^{\otimes (n+1)} \hookrightarrow \A^{\otimes (n+1)}\oplus\A^{\otimes (n+1)}=C_n^{\awb}(T(\A)), \quad n\geq 1
\] 
gives rise to a morphism of chain complexes $C_{*+1}^{\Hoch}\!({\A}) \hookrightarrow C_*^{\awb}(T(\A))$. Thus, we have an induced homomorphism in homology ${\Hoch}_{n+1}\!({\A}) \to \mathsf H_n^{\awb}({T(\A)})$ ($n\geq 0$), which is clearly an epimorphism for $n= 1$ and  an isomorphism for $n=0$.
\end{remark}

%%%%%%%%%%%%%%%%%%%%%%%55

Now we show that the homology of \awbs is fitted in the context of homology theory developed by Quillen in
a very general framework \cite{Qu}. Let us recall that the Quillen homology of an object in an algebraic category $\mathcal{C}$ is defined via
the derived functors of the abelianization functor $(-)^{\ab} \colon \mathcal{C} \to \mathcal{C}^{\ab} $ from $\mathcal{C}$ to the abelian category
		 $\mathcal{C}^{\ab}$ of abelian group objects in $\mathcal{C}$. To specify this theory for \awbs, we proceed as follows.

Given an \awb $\A$, choose any free simplicial resolution $\F_*$ of $\A$, that is, an aspherical augmented simplicial \awb $\F_* \xrightarrow{\epsilon} \A$ (which means that all non-zero homotopies are trivial, $\pi_n(\F_*)=0$ for $n\geq 0$, and $\epsilon$ induces an isomorphism $\pi_0(\F_*)\cong \A$) such that each component $\F_n$, $n \geq 0$, is a free \awb.
Then the $n$-th Quillen homology of $\A$ is defined by 
\[
H^{Q}_n(\A)=H_n (\F_*^{\ab}), \quad  n \geq 0.
\] 
Here $\F_*^{\ab}$ is the simplicial vector space obtained by applying the functor $(-)^{\ab}$ dimension-wise to $\F_*$.

In the proof of the theorem immediately below, we need to use the result from  \cite{CP} that if $\F$ is a free \awb, then the homology of the complex $\left ( C_*^{\awb}({\F}) , d_* \right )$ vanishes in positive dimensions, that is, 
	\[
	{\mathsf H_n^{\awb}}({\F})=0, \quad \text{for} \quad n\geq 1.
	\]

\begin{theorem}\label{T:Quillen}
Let $\A$ be an \awb. Then there is an isomorphism of vector spaces
\[
{\mathsf H^{\awb}_n}({\A}) \cong H^{Q}_n(\A), \quad n \geq 0.
\]
\end{theorem}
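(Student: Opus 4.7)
The plan is to exploit a standard bicomplex argument. Choose a free simplicial resolution $\F_* \xrightarrow{\epsilon} \A$, as in the hypothesis of Quillen homology, and form the first-quadrant bicomplex $B_{p,q} \coloneqq C_p^{\awb}(\F_q)$, whose horizontal differential is the AWB boundary $d_*$ and whose vertical differential is the alternating sum (Moore differential) of face maps of $\F_*$. I will analyze its total complex by comparing the two standard spectral sequences it supports.

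Computing by rows first, the horizontal homology in row $q$ is $\mathsf H_p^{\awb}(\F_q)$. Because each $\F_q$ is free, the vanishing result $\mathsf H_n^{\awb}(\F)=0$ for $n\geq 1$ cited from \cite{CP} immediately before the theorem kills every column with $p \geq 1$, leaving only column $p=0$, which is the simplicial vector space $\F_*^{\ab}$. Taking its vertical homology then yields $H_q(\F_*^{\ab}) = H^Q_q(\A)$ in total degree $q$, so this spectral sequence collapses at $E^2$ to $H^Q_*(\A)$.

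Computing by columns first, I need to show that for each fixed $p$, the simplicial vector space $q \mapsto C_p^{\awb}(\F_q)$ is aspherical with $\pi_0 \cong C_p^{\awb}(\A)$. Because $\F_* \to \A$ is in particular a resolution of the underlying simplicial vector space, the underlying simplicial vector space of $\F_*$ is aspherical with $\pi_0 \cong \A$. Since $C_p^{\awb}(-)$, regarded as a functor on vector spaces, is a direct sum of copies of $(-)^{\otimes(p+1)}$, and tensor products of aspherical simplicial vector spaces over the field $\K$ remain aspherical by the Künneth formula, it follows that $C_p^{\awb}(\F_*)$ is aspherical with $\pi_0 = C_p^{\awb}(\A)$. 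Hence the second spectral sequence collapses at $E^2$ and converges to $\mathsf H_p^{\awb}(\A)$ in total degree $p$.

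Comparing the two collapses produces the desired isomorphism $\mathsf H^{\awb}_n(\A) \cong H^Q_n(\A)$, consistent in degree zero with the general identity $H^Q_0(\A) \cong \A^{\ab}$. The main technical point, which I expect to need the most care, is the Künneth-style asphericity of $C_p^{\awb}(\F_*)$: one must check that the functoriality used to promote the simplicial structure on $\F_*$ to $C_p^{\awb}(\F_*)$ is compatible with the tensor-power decomposition, and that working over a field (as assumed in the paper) is what makes Künneth applicable degree-wise without $\mathrm{Tor}$-correction terms.
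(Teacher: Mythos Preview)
Your proof is correct and follows essentially the same bicomplex/double spectral sequence argument as the paper. The only minor difference is in how column-wise acyclicity of $C_p^{\awb}(\F_*) \to C_p^{\awb}(\A)$ is justified: you appeal to the K\"unneth formula for tensor powers of aspherical simplicial vector spaces, whereas the paper invokes an explicit linear contraction of the underlying augmented simplicial vector space $\F_* \to \A$ (which then passes to tensor powers); both arguments are valid over a field and amount to the same thing.
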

\begin{proof} 
	First of all let us note that the homology chain complex $C_{*}^{\awb}$ is functorial in the sense that a homomorphism $\A \to \A'$   gives rise the chain map $C_*^{\awb}(\A) \to C_*^{\awb}(\A')$ in the canonical way. 
	
Now, given a free simplicial resolution $\F_*$ of $\A$, by applying the functor $C_{n}^{\awb}$ dimension-wise, and then taking the alternating sums of face homomorphisms, we get an augmented chain complex of vector spaces  $C_{n}^{\awb}(\F_*) \to C_{n}^{\awb}(\A)$. Since $\F_* \to \A$ is an aspherical simplicial \awb, we claim that $C_{n}^{\awb}(\F_*) \to C_{n}^{\awb}(\A)$ is acyclic chain complex for any $n\geq 0$.  This is easy to confirm, since by forgetting \awb structure in the simplicial \awb $\F_* \to \A$, we get a simplicial vector space having a linear left (right) contraction.

Then using the facts that $\mathsf H^{\awb}_n({\F_m})=0$ and $\mathsf H^{\awb}_0({\F_m})=\F_m^{\ab}$ for any $n\geq 1$ and $m\geq 0$
it follows that both spectral sequences for the bicomplex $C_*^{\awb}(\F_*)$ degenerate and give the required isomorphism.
\end{proof}

\section{Crossed modules of \awbs}\label{S:CMAWB}
%--------------------------------------------------------------------------------
\subsection{Actions and semi-direct product}

\begin{definition} \label{action def} 
	Let ${\A}$ and ${\M}$ be two \awbs. An action of ${\A}$ on ${\M}$ consists of four bilinear maps
\[
\begin{array}{llcll}
	{\A} \times {\M} \to {\M}, & (a, m) \mapsto {^{a \cdot} m}, & & {\M} \times {\A}  \to {\M}, &(m, a) \mapsto m^{\cdot a},\\
	{\A} \times {\M} \to {\M}, & (a, m) \mapsto {^{a \ast} m}, & & {\M} \times {\A}  \to {\M}, & (m, a) \mapsto m^{ \ast a},\\
\end{array}
\]
such that the following conditions hold:
	\begin{align}\label{equations_action}
				{^{(a_1 a_2) \cdot}} m &= {^{a_1 \cdot}} \left({^{a_2 \cdot}} m\right), & \left(^{a_1 \cdot} m \right)^{ \ast a_2} &= {^{a_1 \cdot}} \left( m^{ \ast a_2} \right) + {^{[a_1, a_2] \cdot}} m, \notag\\
				m ^{\cdot {(a_1 a_2)}} &= \left(m^{ \cdot {a_1}}\right)^{ \cdot {a_2}} , & \left(m^{ \cdot {a_1}}\right)^{ \ast {a_2}} &=  \left(m^{ \ast {a_2}}\right)^{ \cdot {a_1}} + m^{ \cdot {[a_1, a_2]}},\notag\\
				\left({^{a_1 \cdot}} m \right)^{ \cdot {a_2}}  &= {^{a_1 \cdot}} \left(m^{ \cdot {a_2}} \right) , & {^{\left(a_1 a_2\right) \ast}} m &= {^{a_1 \cdot}} \left( {^{a_2 \ast}} m \right) + \left( {^{a_1 \ast}} m \right)^{ \cdot {a_2}},\\
				(m_1 m_2)^{ \cdot a} &= m_1 (m_2^{ \cdot {\ a}}), & [m_1^{ \cdot { a}},m_2] &= m_1 \left({^{a \ast}} {m_2}\right) + [m_1, m_2]^{ \cdot a},\notag \\
				{^{a \cdot}} \left(m_1 m_2\right)  &= \left({^{a \cdot}}  m_1 \right) m_2 , & [{^{a \cdot}} {m_1}, m_2] &=  {^{a \cdot}} [m_1, m_2] + \left({^{a \ast}} {m_2} \right) m_1,\notag\\
				\left(m_1^{ \cdot  a}\right) m_2 & = m_1 \left({^{a \cdot}} {m_2} \right) , & (m_1 m_2)^{ \ast a} &= {{m_1}}\left( {m_2}^{ \ast a} \right) + \left( {m_1}^{ \ast a} \right) m_2,\notag
	\end{align}
%	\begin{equation}\label{equations_action}
%	\begin{array}{lcl}
%		{^{(a_1 a_2) \cdot}} m = {^{a_1 \cdot}} \left({^{a_2 \cdot}} m\right), && \left(^{a_1 \cdot} m \right)^{ \ast a_2} &= {^{a_1 \cdot}} \left( m^{ \ast a_2} \right) + {^{[a_1, a_2] \cdot}} m,\\[2mm]
%		m ^{\cdot {(a_1 a_2)}}  = \left(m^{ \cdot {a_1}}\right)^{ \cdot {a_2}} , && \left(m^{ \cdot {a_1}}\right)^{ \ast {a_2}} =  \left(m^{ \ast {a_2}}\right)^{ \cdot {a_1}} + m^{ \cdot {[a_1, a_2]}},\\[2mm]
%		\left({^{a_1 \cdot}} m \right)^{ \cdot {a_2}}  = {^{a_1 \cdot}} \left(m^{ \cdot {a_2}} \right) , && {^{\left(a_1 a_2\right) \ast}} m = {^{a_1 \cdot}} \left( {^{a_2 \ast}} m \right) + \left( {^{a_1 \ast}} m \right)^{ \cdot {a_2}},\\[2mm]
%		(m_1 m_2)^{ \cdot a} = m_1 (m_2^{ \cdot {\ a}}), && [m_1^{ \cdot { a}},m_2] = m_1 \left({^{a \ast}} {m_2}\right) + [m_1, m_2]^{ \cdot a},\\[2mm]
%		{^{a \cdot}} \left(m_1 m_2\right)  = \left({^{a \cdot}}  m_1 \right) m_2 , && [{^{a \cdot}} {m_1}, m_2] =  {^{a \cdot}} [m_1, m_2] + \left({^{a \ast}} {m_2} \right) m_1,\\[2mm]
%		\left(m_1^{ \cdot  a}\right) m_2  = m_1 \left({^{a \cdot}} {m_2} \right) , && (m_1 m_2)^{ \ast a} = {{m_1}}\left( {m_2}^{ \ast a} \right) + \left( {m_1}^{ \ast a} \right) m_2,\\
%	\end{array}
%	\end{equation}
	for all $a, a_1, a_2 \in {\A}$, $m, m_1, m_2 \in {\M}$. The action is called trivial if all these bilinear maps are trivial.
\end{definition}

Let us remark that if an action of an \awb  $\A$ on an abelian \awb  $\M$ is given,
then all six equations in the last three lines of \eqref{equations_action} vanish and we get the definition of a \emph{representation} $\M$ of $\A$ (see \cite{CP}).

\begin{example}\label{action}\
	\begin{itemize}
		\item[(i)] If $\M$ is a representation of an \awb  $\A$ thought as an abelian \awb, then there is an action of $\A$ on the abelian \awb  $\M$.
		
		\item[(ii)] If $\A$ is a subalgebra of some \awb  $\B$
		(maybe $\A = \B$) and if $\M$ is a two-sided ideal of  $\B$, then the operations in ${\B}$ yield an action of ${\A}$ on ${\M}$  given by  ${^{a \cdot}m}= a m$, $m^{\cdot a} = m a$, ${^{a \ast} m} = [a,m]$, $m^{\ast a} = [m,a]$, for all $m \in {\M}$ and $a \in {\A}$.
		
		\item[(iii)] If $ 0\to {\M} \xrightarrow{i}  {\B} \xrightarrow{\pi}  {\A }\to 0$ is a split short exact sequence of \awbs, that is, there exists a homomorphism  $s\colon  {\A}\to {\B}$ of \awbs  such that $\pi \circ s = \id_{\A}$, then there is an action of  ${\A}$ on ${\M}$, given by:
		\begin{align*}
			{}^{a \cdot}m &= i^{-1}\big( s(a) i(m)\big), & m^{\cdot a} &= i^{-1}\big( i(m) s(a) \big), \\
			{}^{a \ast}m &= i^{-1}\big(\left[s(a), i(m) \right]\big), & m^{\ast a}&= i^{-1}\big(\left[i(m), s(a)\right]\big),
		\end{align*}
		 for any $a\in {\A}$, $m\in {\M}$.
		
		\item[(iv)] Any homomorphism of \awbs  $f \colon  {\A} \to {\M} $  induces an action of ${\A}$ on ${\M}$ in the
		standard way by taking images of elements of ${\A}$ and operations in  ${\M}$.
		
		\item[(v)] If $\mu \colon {\M}\to {\A}$ is a surjective homomorphism of \awbs and the kernel of $\mu$ is contained in the center of $\M$, i.e. $\Ker(\mu) \subseteq {\Z}({\M})$, then there is an action of ${\A}$ on ${\M}$, defined in the standard way, i.e. by choosing pre-images of elements of ${\A}$ and taking operations in ${\M}$.
	\end{itemize}
\end{example}

\begin{definition}
	Let ${\A}$ and ${\M}$ be \awbs with an action of ${\A}$ on ${\M}$. The semi-direct product of ${\M}$ and ${\A}$, denoted by ${\M} \rtimes {\A}$, is the \awb whose  underlying vector space is ${\M} \oplus {\A}$ endowed with the operations
	\begin{align*}
		(m_1, a_1) (m_2, a_2) & = \big(m_1 m_2 + {}^{a_1 \cdot} {m_2} +{m_1}^{\cdot a_2}, a_1 a_2 \big), \\
		[(m_1, a_1), (m_2, a_2)] & = \big([m_1, m_2] + {}^{a_1 \ast} {m_2} + {m_1}^{\ast a_2}, [a_1, a_2]\big)
	\end{align*}
	for all $m_1, m_2 \in {\M}$, $a_1, a_2 \in {\A}$.
\end{definition}

Given an action of an \awb $\A$ on $\M$, straightforward calculations show that the sequence of \awbs
\[
	0 \lra {\M} \xrightarrow{ \ i \ } {\M} \rtimes {\A} \xrightarrow{ \ \pi \ }  {\A} \lra 0
\]
where $i(m)=(m, 0), \pi(m,a)=a$, is exact. Moreover ${\M}$ is a two-sided ideal of ${\M} \rtimes {\A}$ and this sequence splits by $s \colon  {\A} \to {\M} \rtimes {\A}, s(a) = (0, a)$. Then, as in Example~\ref{action}~(iii), the above sequence induces another action of ${\A}$ on $\M$ given by
\begin{align*}
	{}^{a \cdot}m &= i^{-1}\big((0,a) (m,0)\big), \qquad  m^{\cdot a} = i^{-1}\big((m,0) (0,a)\big), \\
	{}^{a \ast}m & = i^{-1}\big[(0,a), (m,0)\big],  \quad \ \ m^{\ast a} = i^{-1}\big[(m,0), (0,a)\big],
\end{align*}
which actually matches the given one.

\subsection{Crossed modules}

\begin{definition}\label{def crossed module}
	A crossed module of  \awbs is a homomorphism of \awbs $\mu \colon  {\M} \to {\A}$ together with an action of $\A$ on $\M$ such that the following identities hold:
	\begin{itemize}
		\item[(CM1)]
		\begin{align*} 
			\mu(m^{ \cdot a})& = \mu(m) a, & \mu({^{a \cdot}} m) &= a \mu(m),\\
			\mu({}m^{ \ast a})& =  [\mu(m), a],  & \mu({}^{a \ast} m)& = [a, \mu(m)];
		\end{align*}
		\item[(CM2)]
		\begin{align*}
			{^{\mu(m)  \cdot}} m' &= m m'  \quad = m^{ \cdot {\mu(m')}},\\			
			{}^{\mu(m) \ast} m' &= [m, m']  =  m^{ \ast {\mu(m')}} 
			\end{align*}
	\end{itemize}
	for all $m, m' \in {\M}$, $a \in{\A}$.
\end{definition}

\begin{definition}
	A morphism of crossed modules $\left({\M} \xrightarrow{ \ \mu \ }  {\A} \right) \to \left({\M}' \xrightarrow{ \ \mu' \ }  {\A}' \right)$
	is a pair $(\alpha, \beta)$, where $\alpha \colon  {\M} \to {\M}'$ and $\beta \colon  {\A} \to {\A}'$ are homomorphisms of \awbs satisfying:
	\begin{itemize}
		\item[(a)] $\beta \circ \mu = \mu' \circ \alpha$.
		\item[(b)]
		\begin{align*}
		\alpha({^{a \cdot} m}) &= {^{\beta(a) \cdot}} \alpha(m), &  \alpha(m^{ \cdot a}) &=  \alpha(m)^{ \cdot {\beta(a)}} \ ,\\
		\alpha({}^{a \ast} m) &= {}^{\beta(a) \ast} \alpha(m), &  \alpha(m^{ \ast a}) &=   \alpha(m)^{ \ast {\beta(a)}}
		\end{align*}
	\end{itemize}
	for all $a \in {\A}$, $m \in {\M}$.
\end{definition}

It is clear that crossed modules of \awbs constitute a category, denoted by $\XAWB$.

\

The following lemma is an easy consequence of Definition~\ref{def crossed module}.
\begin{lemma} \label{cm}
	Let $\mu \colon  {\M} \to {\A}$ be a crossed module of \awbs. Then the following statements are satisfied:
	\begin{itemize}
		\item[(i)] $\Ker(\mu) \subseteq {\Z}(\M)$.
		\item[(ii)]  $\Ima(\mu)$ is a two-sided ideal of ${\A}$.
		\item[(iii)]  $\Ima(\mu)$ acts trivially on ${\Z}(\M)$, and so trivially on $\Ker(\mu)$. Hence $\Ker(\mu)$ inherits an action of ${\A}/\Ima(\mu)$ making $\Ker(\mu)$ a representation of the \awb  ${\A}/\Ima(\mu)$.
	\end{itemize}
\end{lemma}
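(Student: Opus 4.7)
The plan is to derive all three assertions directly from the crossed-module axioms \textup{(CM1)} and \textup{(CM2)} together with the $\K$-bilinearity of the four action maps; no further machinery is required, and the only mildly delicate point is to read \textup{(CM2)} in both orientations, that is, with the roles of $m$ and $m'$ swapped, in order to access both the ``left'' and ``right'' products and brackets. For part (i), I would fix $m \in \Ker(\mu)$ and an arbitrary $m' \in \M$ and rewrite each of the four operations of $m$ with $m'$ as an action: applying \textup{(CM2)} to the pair $(m,m')$ gives $mm' = {}^{\mu(m)\cdot}m'$ and $[m,m'] = {}^{\mu(m)\ast}m'$, while applying it to $(m',m)$ gives $m'm = (m')^{\cdot \mu(m)}$ and $[m',m] = (m')^{\ast \mu(m)}$. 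Since $\mu(m)=0$ and each action map is $\K$-linear in its $\A$-argument, every right-hand side vanishes, and hence $m \in \Z(\M)$.

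For part (ii), the four identities of \textup{(CM1)} directly express $a\mu(m)$, $\mu(m)a$, $[a,\mu(m)]$ and $[\mu(m),a]$ as $\mu({}^{a\cdot}m)$, $\mu(m^{\cdot a})$, $\mu({}^{a\ast}m)$ and $\mu(m^{\ast a})$ respectively, all of which lie in $\Ima(\mu)$. Thus $\Ima(\mu)$ is closed under multiplication and bracket with arbitrary elements of $\A$ on either side, i.e.\ it is a two-sided ideal of $\A$.

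For part (iii), I would take $z \in \Z(\M)$ and $\mu(m) \in \Ima(\mu)$ and evaluate each of the four possible actions of $\mu(m)$ on $z$ via \textup{(CM2)}: reading it in both orientations produces ${}^{\mu(m)\cdot}z = mz$, $z^{\cdot \mu(m)} = zm$, ${}^{\mu(m)\ast}z = [m,z]$ and $z^{\ast \mu(m)} = [z,m]$, and each of these is zero because $z \in \Z(\M)$. Combined with (i), this shows that $\Ima(\mu)$ acts trivially on $\Ker(\mu)$. To promote this to a representation of $\A/\Ima(\mu)$, I would first verify that $\Ker(\mu)$ is stable under the $\A$-action, which is immediate from \textup{(CM1)}: $\mu({}^{a\cdot}m) = a\mu(m) = 0$ whenever $m \in \Ker(\mu)$, and similarly for the other three actions. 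Since $\Ker(\mu) \subseteq \Z(\M)$, the internal product and bracket on $\Ker(\mu)$ vanish, so $\Ker(\mu)$ is an abelian \awb; the triviality of the $\Ima(\mu)$-action then yields a well-defined induced action of $\A/\Ima(\mu)$ on $\Ker(\mu)$, and by the remark following Definition~\ref{action def} an action on an abelian \awb is precisely a representation.

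I do not expect any genuine obstacle. The only nuisance is the left/right asymmetry noted above: \textup{(CM2)} as stated equates both ${}^{\mu(m)\cdot}m'$ and $m^{\cdot \mu(m')}$ to the \emph{same} product $mm'$, so to access $m'm$ or $z^{\cdot \mu(m)}$ one must first transpose the two arguments before invoking \textup{(CM2)}. Beyond this bookkeeping, the proof is a direct unwinding of the axioms.
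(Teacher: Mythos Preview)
Your proposal is correct and matches the paper's approach: the paper does not give an explicit proof but simply declares the lemma ``an easy consequence of Definition~\ref{def crossed module}'', which is precisely the direct verification from \textup{(CM1)} and \textup{(CM2)} that you spell out. Your handling of the left/right asymmetry in \textup{(CM2)} (swapping $m$ and $m'$ to access $m'm$ and $[m',m]$) is the correct bookkeeping, and your extra care in part~(iii) to check that $\Ker(\mu)$ is $\A$-stable and abelian before passing to the quotient action is exactly what is needed to make the final sentence of the lemma rigorous.
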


\begin{example}
		\item[(i)] Let ${\A}$ be an \awb and ${\B}$ be a two-sided ideal of ${\A}$, then the inclusion ${\B} \hookrightarrow {\A}$  is a crossed module, where the action of ${\A}$ on ${\B}$ is given by the operations in ${\A}$ (see Example~\ref{action}~{(ii)}).  
	
\noindent	Conversely, if $\mu \colon  {\B} \to {\A}$ is a crossed module of \awbs and $\mu$ is an injective map, then ${\B}$ is isomorphic to a two-sided ideal of ${\A}$ by Lemma~\ref{cm} {(ii)}.
	
	\item[(ii)] For any representation ${\M}$ of an \awb  ${\A}$, the trivial map $0 \colon {\M} \to {\A}$ is a crossed module with the action of ${\A}$ on the abelian \awb  ${\M}$ described in Example~\ref{action}~{(i)}.
	
\noindent	Conversely, if $0 \colon  {\M} \to {\A}$ is a crossed module of \awbs, then ${\M}$ is necessarily an abelian \awb and the action of ${\A}$ on ${\M}$
	is equivalent to  ${\M}$ being a representation of ${\A}$.
	
	\item[(iii)] Any homomorphism of \awbs $\mu \colon  {\M} \to  {\A}$, with ${\M}$ abelian and $\Ima(\mu) \subseteq {\Z}({\A})$, provides a crossed module with ${\A}$ acting trivially on ${\M}$.
	
	\item[(iv)]  If  $0  \to {\N} \to {\M} \xrightarrow {\mu} {\A} \to 0$ is a central extension of \awbs,
	then $\mu$ is a crossed module with the induced action of ${\A}$ on ${\M}$ (see Example~\ref{action}~(v)).
\end{example}

\begin{proposition}  Let $\mu \colon  {\M} \to {\A}$ be a crossed module of \awbs. Then the maps
	\begin{itemize}
		\item[(i)] $(\mu, \id_{\A}) \colon  {\M} \rtimes {\A} \to {\A} \rtimes {\A}$,
		\item[(ii)] $(\id_{\M}, \mu) \colon  {\M} \rtimes {\M} \to {\M} \rtimes {\A}$,
		\item[(iii)]  $\varphi \colon  {\M} \rtimes {\A} \to {\M} \rtimes {\A}$ given by $\varphi(m, a) = (-m, \mu(m) + a)$,
	\end{itemize}
	are homomorphisms of \awbs.
\end{proposition}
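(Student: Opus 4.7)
The plan is to verify directly, in each of the three cases, that the map in question preserves both the associative product and the bracket of the semi-direct product \awbs; linearity being immediate in all three cases. The essential ingredients in every computation are the identities (CM1) and (CM2) of Definition~\ref{def crossed module}, together with the self-actions used to form ${\A}\rtimes{\A}$ and ${\M}\rtimes{\M}$ (these being the canonical actions from Example~\ref{action}~(ii) with ${\A}=\B=\M$, respectively ${\M}$ acting on itself).

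For part (i), writing out $(\mu,\id_\A)\big((m_1,a_1)(m_2,a_2)\big)$ and expanding via the semi-direct product formula, one gets a first coordinate $\mu(m_1 m_2)+\mu({}^{a_1\cdot}m_2)+\mu(m_1^{\cdot a_2})$. The middle two terms become $a_1\mu(m_2)$ and $\mu(m_1)a_2$ by (CM1), while (CM2) rewrites $m_1 m_2={}^{\mu(m_1)\cdot}m_2$, so that applying $\mu$ and using (CM1) once more yields $\mu(m_1)\mu(m_2)$. This matches the product $(\mu(m_1),a_1)(\mu(m_2),a_2)$ in ${\A}\rtimes{\A}$. The argument for the bracket is completely analogous, using the bracket parts of (CM1) and (CM2). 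For part (ii), the same bookkeeping runs in the opposite direction: the first coordinate of the image requires the identifications ${}^{\mu(m_1')\cdot}m_2=m_1' m_2$ and $m_1^{\cdot\mu(m_2')}=m_1 m_2'$, which are precisely (CM2), while the second coordinate uses $\mu(m_1' m_2')=\mu(m_1')\mu(m_2')$ exactly as in (i).

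For part (iii), note first that $\varphi$ is an involution: $\varphi(\varphi(m,a))=(m,-\mu(m)+\mu(m)+a)=(m,a)$. To verify multiplicativity, expand
\[
\varphi(m_1,a_1)\,\varphi(m_2,a_2)=(-m_1,\mu(m_1)+a_1)(-m_2,\mu(m_2)+a_2)
\]
using the semi-direct product formula and bilinearity of the actions. In the first coordinate the four mixed terms are $(-m_1)(-m_2)$, $-{}^{\mu(m_1)\cdot}m_2$, $-{}^{a_1\cdot}m_2$, $-m_1^{\cdot\mu(m_2)}$, $-m_1^{\cdot a_2}$; by (CM2) the terms $m_1 m_2$, $-{}^{\mu(m_1)\cdot}m_2$, $-m_1^{\cdot\mu(m_2)}$ collapse to $-m_1 m_2$, giving exactly $-\big(m_1m_2+{}^{a_1\cdot}m_2+m_1^{\cdot a_2}\big)$, which is the first coordinate of $\varphi$ applied to $(m_1,a_1)(m_2,a_2)$. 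For the second coordinate, expanding $(\mu(m_1)+a_1)(\mu(m_2)+a_2)$ produces $\mu(m_1)\mu(m_2)+\mu(m_1)a_2+a_1\mu(m_2)+a_1 a_2$, which by (CM1) and (CM2) equals $\mu\big(m_1 m_2+{}^{a_1\cdot}m_2+m_1^{\cdot a_2}\big)+a_1 a_2$, as required. The bracket compatibility is verified by the same method using the bracket half of (CM1) and (CM2).

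The computations are entirely formal; the main obstacle is purely notational — keeping track of the six action symbols $^{\cdot},\,^{\cdot},\,^{\ast},\,^{\ast}$ on the two sides and applying (CM1)/(CM2) in the correct order so that the two sides visibly match. No extra structural ideas are needed beyond these crossed-module axioms.
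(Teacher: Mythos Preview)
Your proof is correct and follows the same direct-verification approach as the paper. One small refinement: in part~(i) you invoke (CM2) to obtain $\mu(m_1m_2)=\mu(m_1)\mu(m_2)$, but this is unnecessary since $\mu$ is already a homomorphism of \awbs; the paper correspondingly records that (i) requires only (CM1), (ii) only (CM2), and (iii) both, which is a slightly sharper accounting than yours.
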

\begin{proof}
	(i) is a direct consequence of equalities in (CM1) of Definition~\ref{def crossed module},  (ii) follows from equalities in  (CM2), whilst  (iii) requires both  (CM1) and  (CM2).
\end{proof}

	\begin{remark}
		The functors $I$ and $T$ given in Example \ref{ejemplos} (ii) and (iii) preserve actions and crossed modules in the sense of the following assertions:
		\begin{itemize}
			\item[(i)] Any action of an associative algebra $\A$ on  another associative algebra $\M$, $\A\times \M \to \M$, $(a,m)\mapsto a\cdot m$ and 
			$M\times A \to M$, $(m,a)\mapsto m\cdot a$ (see \cite{DIKL, DL}) defines an action of the \awb  $I(\A)$ on  $I(\M)$ (resp. of $T(\A)$  on $T(\M)$),  by letting
			\begin{align*}
				&{}^{a \cdot} m = a\cdot m, \  m^{\cdot a}= m\cdot a, \	 {}^{a \ast} m=0, \ m^{ \ast a}=0,\\
				\big(\text{resp.} \quad	&{}^{a \cdot} m = a\cdot m, \  m^{\cdot a}= m\cdot a, 	\ {}^{a \ast} m=a\cdot m - m\cdot a, \  m^{ \ast a}=m\cdot a - a\cdot m  \,	\big) 
			\end{align*}
			for all $a\in \A$ and $m\in \M$.
			\item[(ii)] If $\mu \colon \M\to \A$ is a crossed module of associative algebras (see again \cite{DIKL, DL}), then the homomorphisms of \awbs 
		$I(\mu) \colon I(\M)\to I(\A)$ and $T(\mu) \colon T(\M)\to T(\A)$, together with the actions of $I(\A)$ on  $I(\M)$ and of $T(\A)$ on $T(\M)$, are crossed modules of \awbs. 
		\end{itemize}
	\end{remark}

In \cite{CKL} we proved equivalence of crossed modules of \awbs with internal categories in the category of \awbs. Now we show their equivalence with $\text{cat}^1$-\awbs.  The following definition of $\text{cat}^1$-\awb is given in complete analogy with Loday's original notion of $\text{cat}^1$-groups \cite{Lo82}.

\begin{definition} A $\text{cat}^1$-\awb  $({\re}, {\pe}, s, t)$ consists of an \awb  ${\re}$, together
with a subalgebra ${\pe}$ and two homomorphisms $s, t \colon {\re} \to {\pe}$ of \awbs 
satisfying the following conditions:
\begin{enumerate}
\item[(a)] $s{\mid}_{\pe} = t{\mid}_{\pe} = \id_{{\pe}}$.
\item[(b)] ${\Ker}(s) ~ {\Ker}(t) = 0 = {\Ker}(t) ~ {\Ker}(s)$.
\item[(c)] $[{\Ker}(s), {\Ker}(t)] = 0 = [{\Ker}(t), {\Ker}(s)]$.
\end{enumerate}
\end{definition}

\begin{definition}
A morphism of $\text{cat}^1$-\awbs $({\re}, {\pe}, s, t) \to ({\re}', {\pe}', s', t')$ is a homomorphism of
\awbs $f \colon {\re} \to {\re}'$ such that $f({\pe}) \subseteq {\pe}'$  and $s' \circ f = f{\mid}_{\pe} \circ s$, $t' \circ f = f{\mid}_{\pe} \circ  t$.
\end{definition}

We let ${\mathsf{cat^1}\!-\!{\AWB}}$ denote the category of $\text{cat}^1$-\awbs.
Then we have the following theorem.

\begin{theorem} \label{equiv}
The categories ${\mathsf {cat^1}\!-\!\AWB}$ and ${\XAWB}$ are equivalent.
\end{theorem}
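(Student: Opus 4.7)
The plan is to construct mutually quasi-inverse functors $F \colon \mathsf{cat^1}\!-\!\AWB \to \XAWB$ and $G \colon \XAWB \to \mathsf{cat^1}\!-\!\AWB$, in direct analogy with Loday's classical correspondence for groups and its well-known variants for associative/Lie algebras.

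For $F$, given a $\text{cat}^1$-\awb $(\re,\pe,s,t)$, I would set $\M \coloneqq \Ker(s)$, $\A \coloneqq \pe$, and $\mu \coloneqq t{\mid}_{\M} \colon \M \to \A$. Because (a) makes $s$ a split epimorphism with splitting the inclusion $\pe\hookrightarrow \re$, one has $\re \cong \M \rtimes \pe$, and hence by Example~\ref{action}~(iii) there is a canonical action of $\A$ on $\M$ by the formulas ${}^{a\cdot}m = am$, $m^{\cdot a}=ma$, ${}^{a\ast}m = [a,m]$, $m^{\ast a}=[m,a]$ computed in $\re$. The identities in (CM1) follow at once from $t$ being an \awb-homomorphism that fixes $\pe$: for instance, $\mu({}^{a\cdot}m) = t(am) = t(a)t(m) = a\mu(m)$. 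For (CM2), observe that for each $m\in \M$ the element $\mu(m)-m = t(m)-m$ lies in $\Ker(t)$ (since $t$ is idempotent onto $\pe$), so condition (b) gives $(\mu(m)-m)m'=0=m'(\mu(m)-m)$ in $\re$, which translates precisely into ${}^{\mu(m)\cdot}m' = mm' = m^{\cdot \mu(m')}$; analogously, condition (c) applied to the bracket of $\mu(m)-m$ with $m'$ yields the bracket half of (CM2). Morphisms translate immediately.

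For $G$, given a crossed module $\mu \colon \M \to \A$, I would set $\re \coloneqq \M \rtimes \A$, $\pe \coloneqq \{(0,a) \mid a\in \A\}$, and define $s,t \colon \re \to \pe$ by
\[
s(m,a) = (0,a), \qquad t(m,a) = (0, \mu(m)+a).
\]
The map $s$ is plainly an \awb-homomorphism; for $t$, a short computation using (CM1) shows that $\mu(m_1m_2 + {}^{a_1\cdot}m_2 + m_1^{\cdot a_2}) + a_1a_2 = (\mu(m_1)+a_1)(\mu(m_2)+a_2)$, and symmetrically for the bracket, so $t$ respects both operations. Condition (a) is clear by inspection. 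For (b) and (c), one computes $\Ker(s) = \M\times\{0\}$ and $\Ker(t) = \{(m,-\mu(m)) \mid m\in\M\}$, and then uses (CM2) to verify, e.g.,
\[
(m,0)\cdot(m',-\mu(m')) = \bigl(mm' - m^{\cdot \mu(m')},\,0\bigr) = 0,
\]
with the other three identities handled by the remaining cases of (CM2).

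Finally I would exhibit the natural isomorphisms $FG \cong \id_{\XAWB}$ and $GF \cong \id_{\mathsf{cat^1}\!-\!\AWB}$. The first is immediate from the construction. For the second, given $(\re,\pe,s,t)$, the decomposition $\re = \Ker(s)\oplus \pe$ as vector spaces yields an \awb-isomorphism $\Ker(s)\rtimes \pe \xrightarrow{\cong} \re$, $(m,a)\mapsto m+a$, which intertwines $s,t$ with the $s,t$ constructed by $G\circ F$; the semi-direct product structure is exactly the one produced by the action recovered in $F$. I expect the main obstacle to be the bookkeeping in (CM2)$\Leftrightarrow$(b)+(c): there are eight identities in (CM2) paired against four vanishing conditions in (b)+(c), and one needs to be careful that the left-right symmetric pairs indeed correspond, using the idempotency of $t$ and the splitting from (a) in a uniform way.
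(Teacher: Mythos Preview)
Your proposal is correct and follows exactly the same construction as the paper: the functor $\Phi$ there sends $(\re,\pe,s,t)$ to $t{\mid}_{\Ker(s)}\colon \Ker(s)\to\pe$ with the action coming from the operations in $\re$, and its quasi-inverse sends $\mu\colon\M\to\pe$ to $(\M\rtimes\pe,\pe,s,t)$ with $s(m,p)=p$, $t(m,p)=\mu(m)+p$. You simply supply the details that the paper leaves as ``easy to see'' and ``straightforward''; in particular, your use of $t(m)-m\in\Ker(t)$ together with conditions (b), (c) to derive (CM2) is the expected argument.
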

\begin{proof}
To a given $\text{cat}^1$-\awb  $({\re}, {\pe}, s, t)$ we associate a crossed module $\mu = t{\mid}_{\M} \colon \M \to \pe$,
 where ${\M} = {\Ker}(s)$ and the action of ${\pe}$ on ${\M}$ is given by the operations in ${\re}$ (see
	Example~\ref{action}~(ii)). It is easy to see that $\mu \colon {\M} \to {\pe}$ is a crossed module of \awbs and the assignment defines 
	a functor $\Phi \colon  {\mathsf {cat^1}\!-\!\AWB} \lra \XAWB$.

Conversely, let $\mu \colon {\M} \to {\pe}$ be a crossed module of \awbs, then the associated $\text{cat}^1$-\awb is given by
   $s, t \colon {\M} \rtimes {\pe} \to {\pe}$, where $ s(m,p)=p$, $t(m,p)= \mu(m)+p$, $m \in {\M}$, $p \in {\pe}$.
    It is straightforward to see that this assignment is functorial and provides a quasi-inverse functor for  $\Phi$.
\end{proof}

\section{Non-abelian tensor product of \awbs} \label{non-ab}

\begin{definition}
Let ${\M}$ and ${\N}$ be \awbs with mutual actions on each other. The actions are said to be compatible if
{\allowdisplaybreaks
\begin{align}\label{compatible}
	m^{\cdot ({^{m' \cdot}}n')} &= m({m'}^{\cdot n'}), &  m^{\cdot (n'{^{\cdot {m'}}})} &= m (^{n' \cdot}{m'}), \notag\\
	m^{\cdot({^{m' \ast}}n')} &= m({m'}^{\ast n'}), & m^{\cdot(^{n'^{\ast m'}})} &= m(^{n' \ast}{m'}),\notag\\
	m^{\ast (^{m' \cdot}n')} &= [m, m'^{\cdot n'}], & m^{\ast (n'^{\cdot m'})} &= [m, ^{n' \cdot} m'],\notag \\
	m^{\ast(^{m'\ast} n')} &= [m, m'^{\ast n'}], & m^{\ast(n'^{\ast m'})} &= [m, {^{n'\ast}} m'],\\
	^{(m^{\cdot n}) \cdot }n' &= (^{m \cdot} n)n', &  ^{(^{n \cdot} m) \cdot }n' &= (n^{ \cdot m})n' ,\notag\\
	^{ {(m^{\ast n}) \cdot}} n' &= ({^{m \ast}} n)n', & {^{(^{n \ast} m)\cdot}} n' &= (n^{\ast m})n',\notag\\
	{^{(m^{\cdot n}){\ast}}} n' &=  [{^{m \cdot}}n,n'], & {^{(^{n \cdot}m){\ast}}} n' &= [n^{\cdot m},n'], \notag\\
	{^{(m^{\ast n})\ast}} n' &= [{^{m \ast}} n, n'], & {^{(^{n \ast} m)\ast}} n'  &= [{n^{\ast m}}, n'], \notag
\end{align}
}and moreover, another $16$ equations obtained  by exchanging the roles of elements of ${\M}$ and ${\N}$ in \eqref{compatible} are also valid.
\end{definition}

\begin{example} \label{compatible actions}\
\begin{enumerate}
\item[(a)] If ${\M}$ and ${\N}$ are two-sided ideals of an \awb  $\A$, then the mutual actions on each other considered in Example~\ref{action}~(ii) are compatible.

\item[(b)] Let $\mu \colon {\M} \to {\pe}$ and $\nu \colon {\N} \to {\pe}$ be two crossed modules of \awbs. Then the mutual actions of ${\M}$ on ${\N}$ via $\mu$ and of ${\N}$ on ${\M}$ via $\nu$ are compatible.
\end{enumerate}
\end{example}

Let ${\M}$ and ${\N}$ be \awbs with mutual compatible actions on each other. We denote by ${\M} \odot {\N}$ the vector space spanned by all symbols $m \odot n$, $n \odot m$ and by ${\M} \circledast {\N}$ the vector space spanned by all
symbols $m \circledast n$, $n \circledast m$, for $m\in \M$, $n\in \N$. Let ${\M} \boxtimes {\N}$ denotes the quotient of $ \left(  {\M} \odot {\N}\right)  \oplus \left( {\M} \circledast {\N}\right) $ by the following relations:
\begin{align}\label{non-abelian}
	\lambda (m \star n) &= (\lambda m) \star n = m \star (\lambda n),& & \notag\\
	(m + m') \star n &= m \star n + m' \star n, & m \star (n + n') &= m \star  n + m \star n',\notag\\
	m^{ \cdot n} \star {^{m' \cdot}}  n' &= {^{m \cdot}} n \star m'^{ \cdot  n'}, & m^{ \cdot n} \ \star {n'^{\cdot m'}} &= {^{m \cdot}} n \star {^{ n' \cdot }}{m'},\notag\\
	{^{n \cdot}}m \star {n'^{\cdot m'}}  &= n^{\cdot m}  \star ^{n' \cdot}m', & ^{n \cdot}m \star {^{m' \cdot}}  n' &= n^{ \cdot m}  \star m'^{ \cdot  n'},\notag\\
	m^{ \cdot n} \star {{}^{m' \ast}}  n' &= {^{m \cdot}} n \star m'^{ \ast  n'}, & m^{ \cdot n} \star {n'^{\ast  m'}} &= {^{m \cdot}} n \star {{}^{n' \ast}}{m'},\notag\\
	{^{n \cdot}}m \star {{}^{m' \ast}} n'  &= n^{\cdot m}  \star m'^{\ast n'}, & ^{n \cdot}m \star n'^{{\ast  m'}} &= n^{ \cdot m}  \star {{}^{n' \ast }}m',\notag\\
	m^{ \ast n} \star {^{m' \cdot}}  n' &= {^{m \ast}} n \star m'^{\cdot  n'}, & m^{ \ast n} \star {n'^{\cdot  m'}} &= {^{m \ast}} n \star {^{n' \cdot}}{m'},\notag\\
	{^{n \ast}}m \star {^{m' \cdot }}n'  &= n^{\ast m} \star m'^{\cdot n'}, & ^{n \ast}m \star n'^{{\cdot  m'}} &= n^{ \ast m}  \star {^{n' \cdot }}m',\\
	m^{ \ast n} \star {{}^{m' \ast}}  n' &= {{}^{m \ast}} n \star m'^{\ast  n'}, & m^{ \ast n} \star {n'^{\ast  m'}} &= {^{m \ast}} n \star {^{n' \ast}}{m'},\notag\\
	{^{n \ast}}m \star {^{m' \ast }}n'  &= n^{\ast m}  \star m'^{\ast n'}, & ^{n \ast}m \star n'^{{\ast  m'}} &= n^{ \ast m}  \star {^{n' \ast }}m',\notag\\
	(m_1 m_2) \odot n &= m_1  \odot \left(^{m_2 \cdot}n \right), &  n \odot (m_1 m_2) &= ({n}^{\cdot m_1}) \odot m_2, \notag \\
	(m_1 m_2) \circledast n &= m_1 \odot ({^{m_2 \ast}}n) + ({^{m_1 \ast}} n) \odot m_2, &\notag \\
	{^{m_1 \cdot}} n \circledast m_2 &= m_1 \odot n^{\ast m_2} + [m_1, m_2] \odot n, &  &  \notag \\
	{n^{\cdot m_1}} \circledast m_2 &=  n^{\ast m_2} \odot m_1 + n \odot [m_1, m_2], &  &  \notag \\
	{^{m_1 \cdot}}n \odot m_2&= m_1 \odot n^{\cdot m_2}, &\notag
\end{align}
and another $25$  relations obtained by exchanging the roles of elements of ${\M}$ and ${\N}$ in \eqref{non-abelian}, where  the symbol $\star$  stands for both $\odot$ or $\circledast$. 

\begin{proposition} \label{structure}
The  vector space ${\M} \boxtimes {\N}$ endowed with the product and bracket operations given on the generators by
{\allowdisplaybreaks
\begin{align*}
	(m \odot n)   (m' \odot n') &= (m^{ \cdot n}) \odot (^{m' \cdot}  n'),&  (m \odot n)   (n' \odot m') &= (m^{ \cdot n}) \odot (n'^{ \cdot m'}),\\	
	(n \odot m)  (m' \odot n') &= ({^{n \cdot}} m) \odot ({^{m' \cdot}} n'), & (n \odot m)  (n' \odot m') &= ({^{n \cdot}} m) \odot ({n'}^{ \cdot {m'}}), \\
	(m \odot n)  (m' \circledast n') &= (m^{ \cdot n}) \odot ({^{m' \ast}} n'),&  (m \odot n)  (n' \circledast m') &= (m^{ \cdot n}) \odot (n'^{ \ast m'}),\\	
	(n \odot m)  (m' \circledast n') &= ({^{n \cdot}} m) \odot ({m'}^{ \ast n'}), & (n \odot m)  (n' \circledast m') &= ({^{n \cdot}} m) \odot ({n'}^{ \ast m'}), \\
		(m \circledast n)  (m' \odot n') &= (m^{ \ast n}) \odot ({^{m' \cdot}} n'),&  (m \circledast n)  (n' \odot m') &= (m^{ \ast n}) \odot (n'^{ \cdot m'}),\\
		(n \circledast m)  (m' \odot n') &= ({^{n \ast }}m) \odot ({^{m' \cdot}} n'), & (n \circledast m)  (n' \odot m') &= ({^{n \ast}} m) \odot ({n'}^{ \cdot m'}), \\
		(m \circledast  n)  (m' \circledast  n') &= (m^{ \ast n}) \odot ({^{m' \ast}} n'),&  (m \circledast  n) (n' \circledast  m') &= (m^{ \ast n}) \odot (n'^{ \ast m'}),\\	
	(n \circledast  m)  (m' \circledast  n') &= ({^{n \ast}} m) \odot ({^{m' \ast}} n'), & (n \circledast  m)  (n' \circledast  m') &= ({^{n \ast}} m) \odot ({n'}^{ \ast m'}), \\
		[m \odot n, m' \odot n'] &= (m^{ \cdot n}) \circledast ({^{m' \cdot}} n'), & [m \odot n, n' \odot m'] &= (m^{ \cdot n}) \circledast (n'^{ \cdot m'}),\\
		[n \odot m, m' \odot n'] &= ({^{n \cdot}} m) \circledast ({^{m' \cdot}} n'), &  [n \odot m, n' \odot m'] &= ({^{n \cdot}} m) \circledast ({n'}^{ \cdot m'}),\\
		[m \odot n, m' \circledast n'] &= (m^{ \cdot n}) \circledast ({^{m' \ast}} n'), & [m \odot n, n' \circledast m'] &= (m^{ \cdot n}) \circledast (n'^{ \ast m'}),\\
		[n \odot m, m' \circledast n'] &= ({^{n \cdot}} m) \circledast ({^{m' \ast}} n'), &  [n \odot m, n' \circledast m'] &= ({^{n \cdot}} m) \circledast ({n'}^{ \ast m'}),\\
		[m \circledast n, m' \odot n'] &= (m^{ \ast n}) \circledast ({^{m' \cdot}} n'), & [m \circledast n, n' \odot m'] &= (m^{ \ast n}) \circledast (n'^{ \cdot m'}),\\	
	[n \circledast m, m' \odot n'] &= ({^{n \ast}} m) \circledast ({^{m' \cdot}} n'), &  [n \circledast m, n' \odot m'] &= ({^{n \ast}} m) \circledast ({n'}^{ \cdot m'}),\\
		[m \circledast n, m' \circledast n'] &= (m^{ \ast n}) \circledast (^{m' \ast} n'), & [m \circledast n, n' \circledast m'] &= (m^{ \ast n}) \circledast (n'^{ \ast m'}),\\
		[n \circledast m, m' \circledast n'] &= (^{n \ast} m) \circledast ({}^{m' \ast} n'), &  [n \circledast m, n' \circledast m'] &= ({^{n \ast}} m) \circledast ({n'}^{ \ast m'}),
\end{align*}
}
has the structure of an \awb.
\end{proposition}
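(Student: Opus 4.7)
The plan is to establish the proposition in three steps: well-definedness of the two operations, associativity of the product, and the Leibniz-type bracket identity \eqref{awbequa}.

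Step 1 (well-definedness). Because the product and bracket are prescribed only on generators, I must verify that they descend to the quotient, i.e.\ that they respect every defining relation in \eqref{non-abelian}. Bilinearity is automatic. For each of the remaining relations I would multiply, and then bracket, both sides with an arbitrary generator of the four admissible shapes ($m\odot n$, $n\odot m$, $m\circledast n$, $n\circledast m$) and check that the outputs coincide in $\M\boxtimes\N$. The key observation is that the product (resp.\ bracket) of any two generators is again a single $\odot$-symbol (resp.\ $\circledast$-symbol), so each check reduces to a symbolic identity in $\M$ and $\N$ that follows from the action axioms of Definition~\ref{action def} combined with the compatibility conditions \eqref{compatible}. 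The many ``exchange'' relations such as $m^{\cdot n}\star{}^{m'\cdot}n' = {}^{m\cdot}n\star m'^{\cdot n'}$ are designed precisely so that the ambiguity in how one writes a product/bracket on the level of generators is neutralized by compatibility.

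Step 2 (associativity). For generators $x,y,z$ of four possible shapes, there are in principle $64$ cases for $(xy)z = x(yz)$. In each case both sides simplify, by the defining formulas, to a single expression of the form $(u^{\cdot v})\odot{}^{u'\cdot}v'$ or one of its $\ast$-decorated variants, for suitable $u,u'\in\M$ and $v,v'\in\N$. The required equality then follows either directly from the associativity-type equations in the first column of \eqref{equations_action} (for the $\cdot$-actions of $\M$ on $\N$ and $\N$ on $\M$), or by invoking the defining relations of $\M\boxtimes\N$ that allow one to shift $\M$- and $\N$-elements between the two slots of a $\odot$-symbol.

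Step 3 (bracket identity). A parallel case analysis checks $[xy,z]=[x,z]\,y+x\,[y,z]$ on generators. After expanding via the defining formulas, both sides become sums of $\circledast$-symbols whose entries are iterated actions; the equality in each case is exactly the awb-Leibniz equation in $\M$ and $\N$ (compare the right-hand column of \eqref{equations_action} and its $\M$--$\N$ swapped analogue).

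The main obstacle is sheer combinatorial bulk: each of Steps~1--3 splits into dozens of cases. However, there is a pervasive symmetry, namely swapping the roles of $\M$ and $\N$ and/or swapping $\odot$ with $\circledast$ (with simultaneous replacement of $\cdot$-actions by $\ast$-actions in the appropriate positions), so only a small number of representative cases actually need to be written out; the rest are obtained by this symmetry. Thus the real content of the proof lies in Step~1, where the compatibility conditions \eqref{compatible} are used in an essential way, after which Steps~2 and~3 become routine rewriting using \eqref{equations_action}.
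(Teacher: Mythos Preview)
Your proposal is correct and follows essentially the same approach as the paper: the paper's proof is a single sentence stating that straightforward calculations, using the compatibility conditions \eqref{compatible} and the relations \eqref{non-abelian}, verify the fundamental identity \eqref{awbequa}. Your three-step outline (well-definedness on the quotient, associativity, and the bracket identity) simply makes explicit what the paper leaves implicit in the phrase ``straightforward calculations'', and your identification of the symmetry between the roles of $\M$ and $\N$ as a device to cut down the casework is a sensible addition that the paper does not spell out.
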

\begin{proof}
Straightforward calculations show that, under the conditions of compatible actions~\eqref{compatible}, by using the relations in \eqref{non-abelian}, the described operations  on ${\M} \boxtimes {\N}$ satisfy the fundamental identity \eqref{awbequa}.
\end{proof}

\begin{definition}
The structure of \awb  on  ${\M} \boxtimes {\N}$ provided by Proposition~\ref{structure} is called the non-abelian tensor product of the \awbs   ${\M}$ and ${\N}$.
\end{definition}

In particular, if the actions are trivial, the non-abelian tensor product can be described as follows.
	
	\begin{proposition}	\label{P:trivial_action}
If ${\M}$ and ${\N}$ are two \awbs with trivial actions on each other, then there is an isomorphism of abelian \awbs
	\[
	\M\boxtimes \N \cong \left( \M^{ab}\otimes_{\K} \N^{ab}\right) \oplus \left( \N^{ab}\otimes_{\K} \M^{ab}\right) \oplus \left(\M^{ab}\otimes_{\K} \N^{ab}\right)\oplus \left( \N^{ab}\otimes_{\K} \M^{ab}\right).
	\]
	\end{proposition}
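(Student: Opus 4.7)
The strategy is that trivial actions cause both the \awb structure on $\M\boxtimes\N$ and the defining relations to collapse enormously, reducing the claim to a vector-space identification of the four generator families with the four tensor-product summands.

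First, every product and bracket formula in Proposition~\ref{structure} has the form $(m^{\cdot n})\odot({^{m'\cdot}}n')$, $(m^{\ast n})\circledast({^{m'\ast}}n')$, and so on, with an action symbol on every tensor factor; with trivial actions these expressions all vanish, so $\M\boxtimes\N$ becomes an abelian \awb and the statement reduces to one about vector spaces. Next, one traces through the relations \eqref{non-abelian}: the first three lines give full bilinearity of each of the four symbols $m\odot n$, $n\odot m$, $m\circledast n$, $n\circledast m$; every equation in the long $\star$-block has an action symbol on both sides and becomes $0=0$; and after substituting $m^{\cdot n}=0$, ${^{m\cdot}}n=0$, etc., the remaining explicit relations together with their $\M\leftrightarrow\N$ counterparts distill to the vanishing of $(m m')\odot n$, $(m m')\circledast n$, $[m,m']\odot n$, and the symmetric equations with $\M$ and $\N$ swapped (including the $\circledast$-counterparts coming from the exchanged forms of lines 13--14). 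Hence each generator is bilinear and annihilates both the product and the bracket subspaces in each of its two arguments, so it factors through $\M^{\ab}\otimes_{\K}\N^{\ab}$ or $\N^{\ab}\otimes_{\K}\M^{\ab}$.

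Finally, I would construct mutually inverse vector-space maps $\Phi$ and $\Psi$: $\Phi$ sends the direct sum into $\M\boxtimes\N$ via the universal property of $\otimes_{\K}$ applied to each of the four summands (well-defined by the previous paragraph), while $\Psi$ routes each of $m\odot n$, $n\odot m$, $m\circledast n$, $n\circledast m$ to its matching summand (well-defined because the surviving relations all vanish on the target). These agree with the identity on generators and so are mutually inverse, giving the claimed isomorphism. The main obstacle lies in the middle step --- in particular, verifying that the exchanged forms of lines 13--14 really do force $[m,m']\circledast n$ and $m\circledast[n,n']$ to vanish, so that the $\circledast$-generators land in the full abelianizations $\M^{\ab},\N^{\ab}$ rather than only in the smaller quotients $\M/\M\M$ and $\N/\N\N$.
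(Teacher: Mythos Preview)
Your approach is essentially the paper's: first note that every product and bracket formula in Proposition~\ref{structure} carries an action symbol and hence vanishes, so $\M\boxtimes\N$ is abelian; then observe that under trivial actions the relations~\eqref{non-abelian} collapse to bilinearity together with the vanishing of $(m_1m_2)\otimes n$, $[m_1,m_2]\otimes n$, and their $\M\leftrightarrow\N$ counterparts, giving the four-fold tensor decomposition. The paper's proof is no more explicit than yours on the point you flag as the main obstacle --- it simply lists these vanishing relations using a single symbol $\otimes$ meant to cover both $\odot$ and $\circledast$, and declares the isomorphism, without isolating which of the defining relations actually forces $[m_1,m_2]\circledast n$, $n\circledast(m_1m_2)$, or $m\circledast[n_1,n_2]$ to vanish; so the concern you raise about the $\circledast$-generators landing in the full abelianizations is one the paper leaves unaddressed as well.
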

\begin{proof}
	Equations in Proposition~\ref{structure} show us easily that  $\M\boxtimes \N$ is abelian in the case of trivial actions.
	The defining relations \eqref{non-abelian} of the non-abelian tensor product say that the vector space $\M\boxtimes \N$ is the quotient of 
$\left( \M\otimes_{\K} \N\right) \oplus \left( \N\otimes_{\K} \M \right) \oplus \left( \M\otimes_{\K} \N\right) \oplus \left( \N\otimes_{\K} \M \right)$ by the relations 
\begin{align*}
0 &=	(m_1 m_2) \otimes n = [m_1, m_2] \otimes n \\ 
&= 	n \otimes (m_1 m_2) = n \otimes [m_1, m_2] \\
& = 	m \otimes (n_1 n_2) = m \otimes [n_1, n_2] \\
&=	(n_1 n_2) \otimes m = [n_1, n_2] \otimes m  
\end{align*}
for all $m,m_1,m_2\in \M$,  $n,n_1,n_2\in \N$. This provides the required isomorphism.
\end{proof}

The non-abelian tensor product of \awbs is functorial in the following sense: let $f \colon {\M} \to {\M}'$ and $g \colon {\N} \to {\N}'$ be homomorphisms of \awbs together with compatible mutual actions of ${\M}$ and ${\N}$, also ${\M}'$  and ${\N}'$ on each other such that $f$, $g$ preserve these actions, i.e.
\begin{align*}
	f(^{n \cdot} m)&= {{}^{g(n) \cdot} f(m)}, \ f(m^{\cdot n}) = f(m)^{\cdot g(n)}, \ \   f(^{n \ast} m) ={}^{g(n) \ast} f(m), \ f(^{n \ast} m) = {{}^{g(n) \ast} f(m)},\\
	g(^{m \cdot} n)&= {{}^{f(m) \cdot} g(n)},  \ g(n^{\cdot m}) = g(n)^{\cdot f(m)}, \quad   \  g(^{m \ast} n) = {}^{f(m) \ast} g(n),  \ \ g(^{m \ast} n) = {{}^{f(m) \ast} g(n)}.
\end{align*}
for all $m \in {\M}, n \in {\N}$, then there is a homomorphism of \awbs
\[f \boxtimes g \colon {\M} \boxtimes {\N}  \xrightarrow{\ \ }  {\M}' \boxtimes {\N}'\]
defined by
\begin{align*}
	\left( f \boxtimes g\right)  (m \odot n) &= f(m) \odot g(n),  \quad    \left( f \boxtimes g\right)(n \odot m) =g(n) \odot f(m),\\
	\left( f \boxtimes g\right) (m \circledast n) &= f(m) \circledast g(n),  \quad   \left( f \boxtimes g\right)(n \circledast m) =g(n) \circledast f(m).
\end{align*}

The non-abelian tensor product of \awbs has a kind of right-exactness property presented in the following theorem.

\begin{theorem}\label{T:right_exactness}
	Let $0\xrightarrow{} \M_1 \xrightarrow{f} \M_2 \xrightarrow{g} \M_3 \xrightarrow{} 0$ be a short exact sequence of \awbs. Let $\N$ be an \awb  together with compatible actions of $\N$ and $\M_i$ $(i=1,2,3)$ on each other and $f$, $g$ preserve these actions.
	Then there is an exact sequence of \awbs
	\[
	\M_1\boxtimes \N \xrightarrow{\,f \boxtimes \id_{\N} \,}  \M_2\boxtimes \N \xrightarrow{\,g\boxtimes \id_{\N}\,} \M_3\boxtimes \N \xrightarrow{} 0.
	\]
\end{theorem}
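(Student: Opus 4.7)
The plan is to follow the classical three-step strategy used for right-exactness of non-abelian tensor products in similar settings (groups, Lie algebras, Leibniz algebras). Since $f$ and $g$ preserve the mutual actions with $\N$, the functoriality of $\boxtimes$ discussed just before the theorem makes $f\boxtimes \id_{\N}$ and $g\boxtimes \id_{\N}$ well-defined homomorphisms of \awbs, so the sequence makes sense. Three things remain to check: (i) $g\boxtimes \id_{\N}$ is surjective, (ii) $\Ima(f\boxtimes \id_{\N}) \subseteq \Ker(g\boxtimes \id_{\N})$, and (iii) the reverse inclusion.

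Claims (i) and (ii) are immediate. For (i), every generator $m_3 \star n$ or $n \star m_3$ of $\M_3 \boxtimes \N$ (with $\star \in \{\odot,\circledast\}$) is the image of $m_2 \star n$ or $n \star m_2$ for any $m_2 \in g^{-1}(m_3)$. For (ii), functoriality gives $(g\boxtimes \id_{\N}) \circ (f\boxtimes \id_{\N}) = (g\circ f) \boxtimes \id_{\N}$, and the zero homomorphism induces the zero map on the tensor product by the scalar bilinearity relation $0 \cdot (m\star n) = 0$ in \eqref{non-abelian}.

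For (iii), I would pass to the quotient $T \coloneqq (\M_2 \boxtimes \N)/\Ima(f\boxtimes \id_{\N})$. By (ii), $g\boxtimes \id_{\N}$ factors through a surjection $\bar g \colon T \to \M_3\boxtimes \N$, and the task is to show $\bar g$ is an isomorphism by exhibiting an inverse $\psi \colon \M_3 \boxtimes \N \to T$. Define $\psi$ on generators by
\[
\psi(m_3 \star n) = \overline{m_2 \star n}, \qquad \psi(n \star m_3) = \overline{n \star m_2},
\]
where $m_2$ is an arbitrary lift of $m_3$, and extend linearly. Independence of the choice of lift uses only exactness at $\M_2$ and bilinearity: if $g(m_2) = g(m_2')$, then $m_2 - m_2' = f(m_1)$ for some $m_1 \in \M_1$, so $m_2 \star n - m_2' \star n = f(m_1)\star n \in \Ima(f\boxtimes \id_{\N})$. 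Once $\psi$ is shown to be well defined, $\bar g \circ \psi$ and $\psi \circ \bar g$ agree with the identity on generators, yielding the isomorphism.

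The main obstacle, and the only nontrivial labor, is verifying that $\psi$ respects every one of the relations \eqref{non-abelian} defining $\M_3 \boxtimes \N$. Bilinearity relations transfer immediately. Each relation involving the mutual actions relies on the hypothesis that $f,g$ preserve actions, which gives identifications such as ${^{m_3 \cdot}}n = {^{g(m_2) \cdot}} n = {^{m_2 \cdot}} n$ and ${^{n \cdot}}m_3 = g({^{n \cdot}} m_2)$, together with analogous identities for the remaining six action operations. Consequently each relation in $\M_3 \boxtimes \N$ lifts to the corresponding relation that already holds in $\M_2 \boxtimes \N$, and hence in $T$. The list is long but every single check is mechanical.
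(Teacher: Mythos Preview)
Your argument is correct and follows essentially the same route as the paper's proof: both verify surjectivity and $\Ima\subseteq\Ker$ directly, then construct an inverse $\psi$ (the paper calls it $\alpha'$) to the induced map $\bar g$ on the quotient by choosing lifts along $g$. The only cosmetic difference is that the paper first checks explicitly that $\Ima(f\boxtimes\id_{\N})$ is a two-sided ideal of $\M_2\boxtimes\N$ (so that the quotient is an \awb and $\alpha'$ can be declared an \awb homomorphism), whereas you work at the level of linear maps---which suffices, since once $\bar g$ is a linear bijection the equality $\Ker(g\boxtimes\id_{\N})=\Ima(f\boxtimes\id_{\N})$ follows and the ideal property comes for free from being a kernel.
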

\begin{proof}
	It is clear that the composition $\left( g\boxtimes \id_{\N}\right) \left( f \boxtimes \id_{\N}\right)$ is the trivial map, i.e. $\Ima\left( f \boxtimes \id_{\N}\right)\subseteq \Ker\left( f \boxtimes \id_{\N}\right)$ and at the same time $f \boxtimes \id_{\N}$ is an epimorphism.
	
	$\Ima\left( f \boxtimes \id_{\N}\right)$ is generated by the elements of the form $f(m_1) \odot n$, $n \odot f(m_1)$, $f(m_1) \circledast n$ and  $n \circledast f(m_1)$, for all $m_1\in \M_1$, $n\in \N$. Since $f$ preserves actions of $\N$, by the relations given in Proposition~\ref{structure}, it is easily verified that  $\Ima\left( f \boxtimes \id_{\N}\right)$ is a two-sided ideal of $\M_2\boxtimes \N$. For instance, taking a  generator of the form  $m_2 \odot n'$ in $\M_2\boxtimes \N$ we have
	\begin{align*}
		&	\left( f(m_1) \odot n\right)   (m_2 \odot n') = f(m_1)^{ \cdot n} \odot {}^{m_2 \cdot} n'= f(m_1^{ \cdot n}) \odot {}^{m_2 \cdot} n' \in \Ima\left( f \boxtimes \id_{\N}\right) ,\\
		%&	(f(m_1) \circledast n)  (m_2 \odot n') = f(m_1)^{ \ast n} \odot {^{m_2 \cdot} n')= f(m_1^{ \ast n}) \odot ({^{m_2 \cdot}} n') \in \Ima\left( f \boxtimes %\id_{\N}\right).
		&	\left( f(m_1) \circledast n\right)   (m_2 \odot n') = f(m_1)^{ \ast n} \odot {}^{m_2 \cdot} n'= f(m_1^{ \ast n}) \odot {}^{m_2 \cdot} n' \in \Ima\left( f \boxtimes \id_{\N}\right).
	\end{align*}
	
	Then, there is a homomorphism of \awbs 
	\[
	\alpha \colon \left( \M_2\boxtimes \N \right) / \Ima\left( f \boxtimes \id_{\N}\right) \lra \left( \M_3\boxtimes \N \right)
	\]
	induced by $g \boxtimes \id_{\N}$, that is, defined on generators by
	\begin{align*}
		&	\alpha \left( \overline{ m_2 \odot n} \right) =  g(m_2) \odot n, \quad  \alpha \left( \overline{ n \odot m_2} \right) =  n \odot g(m_2), \\
		&	\alpha \left( \overline{ m_2 \circledast n} \right) =  g(m_2) \circledast n, \quad \alpha \left( \overline{ n \circledast m_2} \right) =  n \circledast g(m_2). 
	\end{align*}
	where the overdrawn generator denotes the coset of the corresponding element.
	%where a generator overlined above denotes the coset of the corresponding element.
	%where $\overline{x}$ denotes the coset $x + \Ima\left( f \boxtimes \id_{\N}\right)$, for any generator $x$ of $ \M_2\boxtimes \N $.
	On the other hand, we have well-defined homomorphism of \awbs
	\[
	\alpha' \colon \left( \M_3\boxtimes \N \right) \lra \left( \M_2\boxtimes \N \right) / \Ima\left( f \boxtimes \id_{\N}\right)  
	\]
	given on generators by 
	\begin{align*}
		&	\alpha' \left(  m_3 \odot n \right) =  \left( \overline{ m_2 \odot n} \right), \quad 
		\alpha' \left( \overline{ n \odot m_3} \right) =   \overline{ n \odot m_2} , \\
		&	\alpha' \left( { m_3 \circledast n} \right) =  \overline{ m_2 \circledast n}, \quad  \ \quad
		\alpha \left( { n \circledast m_3} \right) = \overline{ n \circledast m_2}, 
	\end{align*}
	where $m_2 \in \M_2$ is any element  such that $g(m_2) = m_3$. Obviously $\alpha$ and $\alpha'$ are inverse to each other, i.e. $\alpha$ is an isomorphism.
	Then the required exactness follows. 
\end{proof}

\begin{proposition}\label{maps_phi_psi}
Let ${\M}$ and ${\N}$ be \awbs with compatible actions on each other. 
\begin{enumerate}
\item[(a)] There  are homomorphisms of \awbs

$\begin{array}{ll} \ \ \ \ \  \psi_{\M} \colon {\M} \boxtimes {\N} \to {\M} \ \text{given by}, & \psi_{\M}(m \odot n) = m^{ \cdot n}, \ \psi_{\M}(n \odot m) = {^{n \cdot}} m,\\
 & \psi_{\M}(m \circledast n) = m^{ \ast n}, \ \psi_{\M}(n \circledast m) = {^{n \ast}} m ; \end{array}$

$\begin{array}{ll} \text{and} \  \psi_{\N} \colon {\M} \boxtimes {\N} \to {\N} \ \text{given by}, & \psi_{\N}(m \odot n) = {^{m \cdot}} n, \ \psi_{\N}(n \odot m) = {n}^{ \cdot m},\\
 & \psi_{\N}(m \circledast n) = {^{m \ast}} n, \ \psi_{\N}(n \circledast m) = {n}^{ \ast m} . \end{array}$

 \item[(b)]  There are actions of ${\M}$ and ${\N}$ on the non-abelian tensor product ${\M} \boxtimes {\N}$ given, for all $m, m' \in {\M}, n, n' \in {\N}$, by
 \begin{align*}
 	^{m \cdot}(m' \odot n') &= m \odot (^{m' \cdot} n'),&  {^{m \cdot}}(n' \odot m') &= m \odot (n'^{\cdot m'}),\\
 	(m' \odot n')^{\cdot m} &= ({^{m' \cdot}} n') \odot m,&  (n' \odot m')^{\cdot m} &= (n'^{\cdot m'}) \odot m,\\
 	{^{m \ast}}(m' \odot n') &= m \circledast (^{m' \cdot} n'),& {^{m \ast}}(n' \odot m') &= m \circledast (n'^{\cdot m'}),\\
 		(m' \odot n')^{\ast m} &= (^{m' \cdot} n') \circledast m,&  (n' \odot m')^{\ast m} &= (n'^{\cdot m'}) \circledast m,\\
 	^{m \cdot}(m' \circledast n') &= m \odot (^{m' \ast} n'),&  {^{m \cdot}}(n' \circledast m') &= m \odot (n'^{\ast m'}),\\
 	(m' \circledast n')^{\cdot m} &= (^{m' \ast} n') \odot m,&  (n' \circledast m')^{\cdot m} &= (n'^{\ast m'}) \odot m,\\
 	{^{m \ast}}(m' \circledast n') &= m \circledast (^{m' \ast} n'),& {^{m \ast}}(n' \circledast m') &= m \circledast (n'^{\ast m'}),\\
 	(m' \circledast n')^{\ast m} &= (^{m' \ast} n') \circledast m,&  (n' \circledast m')^{\ast m} &= (n'^{\ast m'}) \circledast m,
 \end{align*}
%\[
%\begin{array}{rclcrcl}
%^{m \cdot}(m' \odot n') &=& m \odot (^{m' \cdot} n'),&  {^{m \cdot}}(n' \odot m') &=& m \odot (n'^{\cdot m'}),\\
%
%(m' \odot n')^{\cdot m} &=& ({^{m' \cdot}} n') \odot m,&  (n' \odot m')^{\cdot m} &=& (n'^{\cdot m'}) \odot m,\\
%
%{^{m \ast}}(m' \odot n') &=& m \circledast (^{m' \cdot} n'),& {^{m \ast}}(n' \odot m') &=& m \circledast (n'^{\cdot m'}),\\
%
%(m' \odot n')^{\ast m} &=& (^{m' \cdot} n') \circledast m,&  (n' \odot m')^{\ast m} &=& (n'^{\cdot m'}) \circledast m,\\
%
%^{m \cdot}(m' \circledast n') &=& m \odot (^{m' \ast} n'),&  {^{m \cdot}}(n' \circledast m') &=& m \odot (n'^{\ast m'}),\\
%
%(m' \circledast n')^{\cdot m} &=& (^{m' \ast} n') \odot m,&  (n' \circledast m')^{\cdot m} &=& (n'^{\ast m'}) \odot m,\\
%
%{^{m \ast}}(m' \circledast n') &=& m \circledast (^{m' \ast} n'),& {^{m \ast}}(n' \circledast m') &=& m \circledast (n'^{\ast m'}),\\
%
%(m' \circledast n')^{\ast m} &=& (^{m' \ast} n') \circledast m,&  (n' \circledast m')^{\ast m} &=& (n'^{\ast m'}) \circledast m,
%\end{array}
%\]
%$\Bigl($ respectively,
and 
\begin{align*}
	^{n \cdot}(m' \odot n') &= n \odot (m'^{ \cdot n'}),&  {^{n \cdot}}(n' \odot m') &= n \odot (^{n' \cdot} m'),\\
		(m' \odot n')^{\cdot n} &= (m'^{ \cdot n'}) \odot n,&  (n' \odot m')^{\cdot n} &= (^{n'\cdot} m') \odot n,\\
	{^{n \ast}}(m' \odot n') &= n \circledast (m'^{ \cdot n'}),& {^{n \ast}}(n' \odot m') &= n \circledast (^{n' \cdot} m'),\\
	(m' \odot n')^{\ast n} &= (m'^{ \cdot n'}) \circledast n,&  (n' \odot m')^{\ast n} &= (^{n' \cdot} m') \circledast n,\\
	^{n \cdot}(m' \circledast n') &= n \odot (m'^{ \ast n'}),&  {^{n \cdot}}(n' \circledast m') &= n \odot (^{n' \ast} m'),\\
	(m' \circledast n')^{\cdot n} &= (m'^{ \ast n'}) \odot n,&  (n' \circledast m')^{\cdot n} &= (^{n'\ast} m') \odot n,\\
	{^{n \ast}}(m' \circledast n') &= n \circledast (m'^{ \ast n'}),& {^{n \ast}}(n' \circledast m') &= n \circledast (^{n' \ast} m'),\\
	(m' \circledast n')^{\ast n} &= (m'^{ \ast n'}) \circledast n,&  (n' \circledast m')^{\ast n} &= (^{n' \ast} m') \circledast n.  \quad \  %\Bigr)
\end{align*}
%\[
%\begin{array}{rclcrcl}
%^{n \cdot}(m' \odot n') &=& n \odot (m'^{ \cdot n'}),&  {^{n \cdot}}(n' \odot m') &=& n \odot (^{n' \cdot} m'),\\
%
%(m' \odot n')^{\cdot n} &=& (m'^{ \cdot n'}) \odot n,&  (n' \odot m')^{\cdot n} &=& (^{n'\cdot} m') \odot n,\\
%
%{^{n \ast}}(m' \odot n') &=& n \circledast (m'^{ \cdot n'}),& {^{n \ast}}(n' \odot m') &=& n \circledast (^{n' \cdot} m'),\\
%
%(m' \odot n')^{\ast n} &=& (m'^{ \cdot n'}) \circledast n,&  (n' \odot m')^{\ast n} &=& (^{n' \cdot} m') \circledast n,\\
%
%^{n \cdot}(m' \circledast n') &=& n \odot (m'^{ \ast n'}),&  {^{n \cdot}}(n' \circledast m') &=& n \odot (^{n' \ast} m'),\\
%
%(m' \circledast n')^{\cdot n} &=& (m'^{ \ast n'}) \odot n,&  (n' \circledast m')^{\cdot n} &=& (^{n'\ast} m') \odot n,\\
%
%{^{n \ast}}(m' \circledast n') &=& n \circledast (m'^{ \ast n'}),& {^{n \ast}}(n' \circledast m') &=& n \circledast (^{n' \ast} m'),\\
%
%(m' \circledast n')^{\ast n} &=& (m'^{ \ast n'}) \circledast n,&  (n' \circledast m')^{\ast n} &=& (^{n' \ast} m') \circledast n.  \quad \  %\Bigr)
%\end{array}
%\]

 \item[(c)] The homomorphisms $\psi_{\M}$ and $\psi_{\N}$ together with the actions described in the statement $(b)$ are crossed modules of \awbs.
\end{enumerate}
\end{proposition}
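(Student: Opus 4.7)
The plan is to handle the three parts (a), (b), (c) in order, treating each as a generator-by-generator bookkeeping exercise whose atomic steps are instances of the compatibility conditions \eqref{compatible} and of the defining relations \eqref{non-abelian}. No new idea is required beyond ``unpack the definitions''; what must be organized is the sheer number of cases arising from four flavours of generator ($m\odot n$, $n\odot m$, $m\circledast n$, $n\circledast m$) combined with left/right and product/bracket actions.

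For part (a), I would first show that the formulas defining $\psi_{\M}$ (and symmetrically $\psi_{\N}$) descend from $(\M\odot\N)\oplus(\M\circledast\N)$ through all the relations \eqref{non-abelian}. Each such relation, when sent through $\psi_{\M}$, becomes an equality in $\M$ that is precisely a line of \eqref{compatible}. For instance, applying $\psi_{\M}$ to $m^{\cdot n}\odot{}^{m'\cdot}n'={}^{m\cdot}n\odot m'^{\cdot n'}$ yields $(m^{\cdot n})^{\cdot({}^{m'\cdot}n')}=({}^{m\cdot}n)^{\cdot(m'^{\cdot n'})}$, and the first two equations of \eqref{compatible} force both sides to equal $(m^{\cdot n})(m'^{\cdot n'})$. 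Once well-definedness is established, preservation of product and bracket on generators is a one-line match between the formulas of Proposition~\ref{structure} and the corresponding compatibility equations.

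For part (b), the same strategy applies. I would first verify that the proposed action formulas are consistent with the relations \eqref{non-abelian}, and then check the twenty-four axioms \eqref{equations_action} of Definition~\ref{action def} on generators. Each axiom collapses to an equality in $\M\boxtimes\N$ between two generators, and that equality is in every case one of the relations \eqref{non-abelian}; for example, the identity $\bigl({}^{m_1\cdot}(m'\odot n')\bigr)^{\cdot m_2}={}^{m_1\cdot}\bigl((m'\odot n')^{\cdot m_2}\bigr)$ unpacks to $({}^{m_1\cdot}n')\odot\bigl(\,\cdot\,\bigr)$ on both sides after using the defining formula, and then a relation such as ${}^{m_1\cdot}n\odot m_2=m_1\odot n^{\cdot m_2}$ closes the gap. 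The action of $\N$ is treated symmetrically.

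For part (c), with (a) and (b) in hand, the crossed-module axioms of Definition~\ref{def crossed module} are immediate. Axiom (CM1) for $\psi_{\M}$ is an equivariance statement; a representative case is
\[
\psi_{\M}\bigl({}^{m\cdot}(m'\odot n')\bigr)=\psi_{\M}\bigl(m\odot{}^{m'\cdot}n'\bigr)=m^{\cdot({}^{m'\cdot}n')}=m\,(m'^{\cdot n'})=m\,\psi_{\M}(m'\odot n'),
\]
using exactly the first line of \eqref{compatible}; all other cases are of the same shape. Axiom (CM2) is even more direct: comparing the action formulas of (b) with the product and bracket formulas of Proposition~\ref{structure} shows, for instance, that ${}^{m^{\cdot n}\cdot}(m'\odot n')=m^{\cdot n}\odot{}^{m'\cdot}n'=(m\odot n)(m'\odot n')$, so ${}^{\psi_{\M}(x)\cdot}y=xy$ on generators, and similarly for the other three bracket/product variants and for $\psi_{\N}$. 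The main obstacle is not conceptual but combinatorial: with four generator types, left/right, and product/bracket, each of (a), (b), (c) ramifies into dozens of subcases, all trivial in isolation but tedious to enumerate, so in the write-up I would present one representative case per axiom and indicate that the remainder are entirely analogous.
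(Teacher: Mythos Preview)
Your proposal is correct and matches the paper's approach exactly: the paper's proof consists of the single sentence ``This is straightforward but tedious verification,'' which is precisely the generator-by-generator bookkeeping you outline. Your write-up is in fact more informative than the paper's, since you indicate which compatibility conditions and defining relations are invoked at each step.
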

\begin{proof}
This is straightforward but tedious verification.
\end{proof}

\begin{theorem} \label{T:awb_uce}
If ${\A}$ is a perfect \awb, then $\psi_{\A} \colon {\A} \boxtimes {\A} \to {\A}$ is the universal central extension of ${\A}$.
\end{theorem}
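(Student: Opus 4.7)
The plan is to establish three properties of $\psi_\A$: that it is a central extension, that its domain $\A \boxtimes \A$ is perfect, and that it satisfies the universal property for central extensions. By Theorem~\ref{uce} we know a universal central extension exists, so all we need is to check these three conditions for our candidate.

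For centrality, note that $\psi_\A(a \odot a') = a^{\cdot a'} = aa'$ and $\psi_\A(a \circledast a') = a^{\ast a'} = [a,a']$ in the self-action case, so the image of $\psi_\A$ contains $[[\A,\A]] = \A$, i.e.\ $\psi_\A$ is surjective. Since $\psi_\A$ is a crossed module by Proposition~\ref{maps_phi_psi}(c), Lemma~\ref{cm}(i) gives $\Ker(\psi_\A) \subseteq \Z(\A \boxtimes \A)$, so $\psi_\A$ is a central extension. For perfectness of $\A \boxtimes \A$, I would observe that when $\A$ acts on itself the formulas of Proposition~\ref{structure} collapse to
\[
(m\odot n)(m'\odot n') = (mn)\odot (m'n'), \quad (m\circledast n)(m'\odot n') = [m,n]\odot(m'n'),
\]
and analogously $[m\odot n, m'\odot n'] = (mn)\circledast (m'n')$, etc. Writing $a = \sum p_iq_i + \sum [r_j,s_j]$ and $a' = \sum x_ky_k + \sum [u_l,v_l]$ by perfectness of $\A$ and expanding bilinearly, every generator $a\odot a'$ and $a\circledast a'$ becomes a sum of products or brackets of generators of $\A\boxtimes \A$, hence lies in $[[\A\boxtimes \A, \A\boxtimes \A]]$.

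For universality, let $\psi \colon \C \to \A$ be any central extension of $\A$, and for each $a \in \A$ fix a lift $\bar a \in \psi^{-1}(a)$. I would define $\alpha \colon \A \boxtimes \A \to \C$ on generators by
\[
\alpha(a \odot a') = \bar a\, \bar a', \qquad \alpha(a \circledast a') = [\bar a, \bar a'],
\]
and symmetrically for the swapped generators. Well-definedness (independence of the choice of lifts) uses that $\Ker(\psi) \subseteq \Z(\C)$ annihilates everything under multiplication and bracket. To descend this map to the quotient defining $\A\boxtimes \A$, each relation in \eqref{non-abelian} must be checked: the bilinearity relations are immediate; those equating $m^{\cdot n}\star {}^{m'\cdot}n'$ with ${}^{m\cdot}n\star m'^{\cdot n'}$ are trivial in the self-action case; and relations such as $(m_1 m_2)\odot n = m_1 \odot ({}^{m_2 \cdot} n)$ become associativity in $\C$, while relations like $(m_1 m_2)\circledast n = m_1\odot {}^{m_2\ast}n + ({}^{m_1 \ast}n)\odot m_2$ become the \awb identity \eqref{awbequa} in $\C$ (all up to central corrections that vanish in any product or bracket). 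The identity $\psi\circ\alpha = \psi_\A$ holds by construction. For uniqueness, given another homomorphism $\alpha'$ with $\psi \circ \alpha' = \psi_\A$, the difference $\alpha(x) - \alpha'(x)$ lands in $\Ker(\psi)\subseteq \Z(\C)$ for each generator $x$; since central elements vanish in every product and bracket, $\alpha$ and $\alpha'$ coincide on $[[\A\boxtimes \A, \A\boxtimes \A]]$, which by step two is all of $\A\boxtimes \A$.

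The main obstacle is the sheer bookkeeping in checking that $\alpha$ respects all the defining relations of $\A\boxtimes \A$: there are several dozen relations (plus their $\M \leftrightarrow \N$ swaps), but each reduces to associativity, the \awb identity, or the centrality of $\Ker(\psi)$, so the verification is tedious rather than conceptually difficult. The only genuinely conceptual step is the observation that in the self-action case the tensor-product multiplication reduces to the neat form displayed above, which is what makes both the perfectness of $\A\boxtimes \A$ and the existence of $\alpha$ work out.
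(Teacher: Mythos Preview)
Your proof is correct and follows essentially the same approach as the paper: surjectivity from perfectness, centrality via the crossed module structure and Lemma~\ref{cm}(i), construction of the comparison map $\alpha$ by lifting along the given central extension, and perfectness of $\A\boxtimes\A$ from the multiplication formulas in Proposition~\ref{structure}. The only difference is that you spell out the uniqueness argument directly (difference lands in the center, hence vanishes on $[[\A\boxtimes\A,\A\boxtimes\A]]=\A\boxtimes\A$), whereas the paper cites this as \cite[Lemma~3.1]{Ca}; your opening appeal to Theorem~\ref{uce} is harmless but unnecessary, since you verify the universal property from scratch anyway.
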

\begin{proof}
Clearly $\psi_{\A} \colon {\A} \boxtimes {\A} \to {\A}$ is an epimorphism if $\A$ is perfect. Moreover, it is a crossed module of \awbs by 
Proposition~\ref{maps_phi_psi} (c). Then Lemma~\ref{cm} (i) says that it is a central extension.

To show the universal property,  consider any central extension $0 \to {\M} \to {\B} \xrightarrow{\phi}  {\A} \to 0$.  Since $\Ker(\phi) \subseteq \Z(\B)$, we get a well-defined homomorphism of \awbs  $\varphi \colon {\A} \boxtimes {\A} \to {\B}$ given on generators by  $\varphi(a \odot a') = b\,b'$ and  $\varphi(a \circledast a') = [b, b']$,  where $b$ and  $b'$ are any elements in  $\phi^{-1}(a)$ and $\phi^{-1}(a')$, respectively. Obviously $\phi\varphi=\psi_{\A} $. Moreover, since $\A$ is perfect, it follows by the equalities in Proposition~\ref{structure} that ${\A} \boxtimes {\A}$ is a perfect \awb  as well. Then \cite[Lemma 3.1]{Ca} implies that $\varphi$ is the unique homomorphism satisfying the required conditions.
\end{proof}

Bearing in mind that the universal central extension of a perfect \awb  is unique up to isomorphism, by \cite[Theorem 3.5]{Ca} we conclude that \[{\mathsf H_1^{\awb}}({\A}) \cong \Ker(\psi_{\A} \colon \A \boxtimes {\A} \twoheadrightarrow \A ).\]

Moreover, if $0 \to {\mathsf R} \to {\mathsf F} \xrightarrow{\rho}  {\A} \to 0$ is a free presentation of a perfect \awb ${\A}$, then its universal central extension is  
 \[0 \xrightarrow{\ \ \ } \frac{{\mathsf R} \cap [[ {\mathsf F},  {\mathsf F}]]}{[[ {\mathsf F},  {\mathsf R}]]} \xrightarrow{\ \ \ } \frac{[[ {\mathsf F},  {\mathsf F}]]}{[[ {\mathsf F},  {\mathsf R}]]} \xrightarrow{\ \rho^*}  {\A} \xrightarrow{\ \ \ } 0\]
(see \cite{Ca}), hence \[{\A} \boxtimes {\A} \cong \frac{[[ {\mathsf F},  {\mathsf F}]]}{[[ {\mathsf F},  {\mathsf R}]]}\]
due to the uniqueness (up to isomorphisms) of the universal central extension.

\begin{remark}
The article \cite{Ca} provides another description of the universal central extension of a perfect \awb  $\A$. 
 In particular, it is shown that, given an \awb  $\A$, the quotient $\frac{{\A}^{\otimes 2} \oplus {\A}^{\otimes 2}}{I_{\A}}$ has an \awb structure, where $I_{\A}$ is the image of the map $d_2 \colon {\A}^{\otimes 3}\oplus {\A}^{\otimes 3} \to {\A}^{\otimes 2}\oplus {\A}^{\otimes 2} $ in the homology chain complex  
$\left ( C_*^{\awb}({\A}) , d_* \right )$, that is, $I_{\A}$ is the subspace of ${\A}^{\otimes 2} \oplus {\A}^{\otimes 2}$ spanned by the elements of the form 
	\begin{align*}
	& (a_1 a_2) \otimes a_3 - a_1 \otimes (a_2 a_3), \\	
    & 	[a_1,a_2] \otimes a_3 + a_1 \otimes [a_2,a_3] - (a_1 a_2) \circ a_3, 
	\end{align*} 
	 for any $a_1, a_2, a_3  \in {\A}$. 
Moreover, if $\A$ is a perfect \awb, then it gives the construction of the universal central extension of ${\A}$. As a consequence, we have the following isomorphism 
of \awbs
\[ 
\frac{{\A}^{\otimes 2} \oplus {\A}^{\otimes 2}}{I_{\A}}   \xrightarrow{\ \cong }  {\A} \boxtimes {\A}, 
\]
given by $a_1 \otimes a_2 \mapsto a_1 \odot a_2$ and $a_1 \circ a_2  \mapsto a_1 \circledast a_2$.   
\end{remark}

\begin{proposition} \label{sequence}
If ${\M}$ is a two-sided ideal of an \awb ${\A}$, then there is the exact sequence of \awbs
\[
\left( {\M} \boxtimes {\A}\right)  \rtimes ({\A} \boxtimes {\M}) \xrightarrow{\sigma}  {\A} \boxtimes {\A} \xrightarrow{\tau}  {\A}/{\M} \boxtimes {\A}/{\M} \to 0. 
\]
%where ${\M} \ominus {\A} = \langle \{ m \odot a, m \circledast a \mid m \in {\M}, a \in {\A} \} \rangle$ and \ ${\A} %\ominus {\M} = \langle \{ a \odot m, a \circledast m \mid a \in {\A}, m \in{\M} \} \rangle$.

\end{proposition}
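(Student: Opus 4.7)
The plan is to exhibit $\tau$ as the \awb homomorphism induced functorially by the canonical projection $\pi \colon \A \to \A/\M$, sending a generator $a_1 \odot a_2$ (resp.\ $a_1 \circledast a_2$) of $\A \boxtimes \A$ to $\overline{a_1} \odot \overline{a_2}$ (resp.\ $\overline{a_1} \circledast \overline{a_2}$); surjectivity of $\tau$ is then immediate since the generators of $\A/\M \boxtimes \A/\M$ all lie in its image. The inclusion $\M \hookrightarrow \A$ similarly induces homomorphisms $i_1 \colon \M \boxtimes \A \to \A \boxtimes \A$ and $i_2 \colon \A \boxtimes \M \to \A \boxtimes \A$. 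I would form the semi-direct product $(\M \boxtimes \A) \rtimes (\A \boxtimes \M)$ using the action of $\A \boxtimes \M$ on $\M \boxtimes \A$ obtained by composing the crossed module homomorphism $\psi_{\A} \colon \A \boxtimes \M \to \A$ of Proposition~\ref{maps_phi_psi} with the $\A$-action on $\M \boxtimes \A$ supplied by the same proposition, and set $\sigma(x, y) = i_1(x) + i_2(y)$.

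The containment $\Ima(\sigma) \subseteq \Ker(\tau)$ is transparent: every generator of $\Ima(i_1)$ or $\Ima(i_2)$ has at least one tensor factor taken from $\M$, whose image under $\pi$ is zero, so $\tau$ annihilates it by bilinearity from~\eqref{non-abelian}. For the reverse inclusion I would construct a two-sided inverse to the induced homomorphism $\overline{\tau} \colon (\A \boxtimes \A)/\Ima(\sigma) \to \A/\M \boxtimes \A/\M$. Define
\[
\phi \colon \A/\M \boxtimes \A/\M \longrightarrow (\A \boxtimes \A)/\Ima(\sigma)
\]
on generators by $\phi(\overline{a_1} \star \overline{a_2}) = \overline{a_1 \star a_2}$ for $\star \in \{\odot, \circledast\}$, where the outer bar denotes the coset modulo $\Ima(\sigma)$. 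Independence of the chosen lifts holds by bilinearity: replacing $a_1$ by $a_1 + m$ with $m \in \M$ adds $i_1(m \star a_2) \in \Ima(\sigma)$, and symmetrically on the second slot. A case-by-case check confirms that every defining relation~\eqref{non-abelian} of $\A/\M \boxtimes \A/\M$ lifts modulo $\Ima(\sigma)$ to the corresponding relation already in force in $\A \boxtimes \A$; for instance the relation $(\overline{a_1}\, \overline{a_2}) \odot \overline{a_3} = \overline{a_1} \odot {}^{\overline{a_2}\cdot}\overline{a_3}$ reflects directly the identity $(a_1 a_2) \odot a_3 = a_1 \odot {}^{a_2\cdot} a_3$ in $\A \boxtimes \A$. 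Thus $\phi$ is a well-defined \awb homomorphism, and since $\phi$ and $\overline{\tau}$ invert each other on generators, $\overline{\tau}$ is an isomorphism and $\Ker(\tau) = \Ima(\sigma)$.

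The hard part will be the bookkeeping around the semi-direct product: verifying that the chosen action of $\A \boxtimes \M$ on $\M \boxtimes \A$ actually makes $\sigma$ a homomorphism of \awbs reduces to matching the mixed products $i_1(x) \cdot i_2(y')$ and $i_2(y) \cdot i_1(x')$ inside $\A \boxtimes \A$ with the prescribed action, leveraging the compatibility relations~\eqref{compatible}. A cleaner alternative, should those semi-direct product calculations prove unwieldy, is to apply Theorem~\ref{T:right_exactness} twice --- once in each slot, using the manifest symmetry of the defining relations~\eqref{non-abelian} in the two tensor factors --- which yields directly $\Ker(\tau) = \Ima(i_1) + \Ima(i_2) = \Ima(\sigma)$, bypassing the explicit construction of $\phi$ altogether.
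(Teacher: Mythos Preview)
Your main argument is essentially the paper's: construct $\tau$ functorially from the projection, build $\sigma$ from the two inclusion-induced maps $i_1,i_2$ (the paper's $\sigma',\sigma''$), and prove exactness at $\A\boxtimes\A$ by writing down the inverse $\phi$ (the paper's $\tau'$) to the induced map $\overline{\tau}$. You are in fact more careful than the paper on one point: the paper never specifies the action used to form $(\M\boxtimes\A)\rtimes(\A\boxtimes\M)$ nor checks that $\sigma$ respects the \awb structure, whereas you name a concrete action (pull back along $\psi_{\A}$) and flag the verification as the genuine bookkeeping step. Your suggested shortcut via two applications of Theorem~\ref{T:right_exactness} is a legitimately different and cleaner route to the equality $\Ker(\tau)=\Ima(i_1)+\Ima(i_2)$, which the paper does not take.
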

\begin{proof}
The functorial property of the non-abelian tensor product applied to the projection ${\A} \twoheadrightarrow {\A}/{\M}$ induces the surjective homomorphism $\tau$, and applied to $\inc \colon \M \to \A$ and $\id \colon {\A} \to {\A}$ provides the homomorphisms $\sigma' \colon {\M} \boxtimes {\A} \to {\A} \boxtimes {\A}$ and $\sigma'' \colon {\A} \boxtimes {\M} \to {\A} \boxtimes {\A}$.

Define $\sigma(x,y) = \sigma'(x) + \sigma''(y)$, for all $x \in {\M} \boxtimes {\A}$, $y \in {\A} \boxtimes {\M}$. $\Ima(\sigma)$ is a two sided ideal of ${\A} \boxtimes {\A}$ spanned by the elements of the form $m \odot a$, $a \odot m$, $m \circledast a$, $a \circledast m$ for all $a \in {\A}$ and $m\in {\M}$.

By the identities in Proposition~\ref{structure} and the relations \eqref{non-abelian} of the non-abelian tensor product, $\tau$ induces a homomorphism of \awbs $\bar{\tau} \colon \frac{{\A} \boxtimes {\A}}{\Ima(\sigma)} \to {\A}/{\M} \boxtimes {\A}/{\M}$. Define $\tau' \colon {\A}/{\M} \boxtimes {\A}/{\M} \to \frac{{\A} \boxtimes {\A}}{\Ima(\sigma)}$ by $\tau'\big((a_1 + {\M}) \odot (a_2 + {\M})\big) = a_1 \odot a_2 + {\Ima}(\sigma)$, $\tau'\big((a_1 + {\M}) \circledast (a_2 + {\M})\big) = a_1 \circledast a_2 + {\Ima(\sigma)}$. It is easy to check that $\tau'$ is a well-defined homomorphism that is inverse to $\bar{\tau}$.
\end{proof}

\begin{theorem} \label{T:4_term}
Let $\M$ be a two-sided ideal of a perfect \awb ${\A}$. Then there is an exact sequence of vector spaces
\[{\Ker}({\M} \boxtimes {\A} \xrightarrow{\psi_{\M}}   {\M}) \to {\mathsf H_1^{\awb}}({\A}) \to {\mathsf H_1^{\awb}}({\A}/{\M}) \to \frac{\M}{[[{\A},{\M}]]} \to 0\]
\end{theorem}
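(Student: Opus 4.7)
The plan is to combine Theorem~\ref{T:awb_uce} with Proposition~\ref{sequence} and apply the snake lemma. Since $\A$ is perfect and $\M$ is a two-sided ideal, the quotient $\A/\M$ is perfect as well, so by Theorem~\ref{T:awb_uce} both $\psi_\A \colon \A \boxtimes \A \twoheadrightarrow \A$ and $\psi_{\A/\M} \colon \A/\M \boxtimes \A/\M \twoheadrightarrow \A/\M$ are universal central extensions with kernels $\mathsf H_1^{\awb}(\A)$ and $\mathsf H_1^{\awb}(\A/\M)$, respectively. Functoriality of the non-abelian tensor product applied to the projection $\pi \colon \A \twoheadrightarrow \A/\M$ produces the epimorphism $\tau$ of Proposition~\ref{sequence}, and one verifies on generators that $\psi_{\A/\M} \circ \tau = \pi \circ \psi_\A$. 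This yields two short exact sequences connected by the vertical maps $\alpha \colon \mathsf H_1^{\awb}(\A) \to \mathsf H_1^{\awb}(\A/\M)$, $\tau$, and $\pi$.

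Applying the snake lemma (valid in the semi-abelian category $\AWB$, just as the Five Lemma is) gives the exact sequence of vector spaces
\[
0 \to \Ker(\alpha) \to \Ker(\tau) \xrightarrow{\,\overline{\psi_\A}\,} \M \to \Coker(\alpha) \to 0,
\]
since $\Ker(\pi) = \M$ and both $\tau$ and $\pi$ are surjective. By Proposition~\ref{sequence}, $\Ker(\tau) = \Ima(\sigma')$, where $\sigma' \colon \M \boxtimes \A \to \A \boxtimes \A$ is the map induced by the inclusion $\iota \colon \M \hookrightarrow \A$ and $\id_\A$; moreover, $\Ima(\sigma')$ is the two-sided ideal of $\A \boxtimes \A$ linearly spanned by the generators $m \odot a$, $a \odot m$, $m \circledast a$, $a \circledast m$. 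Since $\psi_\A$ is a surjective algebra homomorphism, $\psi_\A(\Ker(\tau))$ is a two-sided ideal of $\A$; it contains $ma$, $am$, $[m,a]$, $[a,m]$ and therefore coincides with $[[\A,\M]]$. Hence $\Coker(\alpha) \cong \M/[[\A,\M]]$, giving the last two arrows of the desired sequence.

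For the first map, observe the naturality identity $\psi_\A \circ \sigma' = \iota \circ \psi_\M$ (check on generators: $\psi_\A(m \odot a) = ma = \iota(\psi_\M(m \odot a))$, and analogously for $a \odot m$, $m \circledast a$, $a \circledast m$). Hence $\sigma'$ restricts to a linear map $\Ker(\psi_\M) \to \Ker(\psi_\A) = \mathsf H_1^{\awb}(\A)$, and its image is precisely $\Ker(\alpha)$: given $y \in \mathsf H_1^{\awb}(\A)$ with $\alpha(y) = 0$, one has $y \in \Ker(\tau) = \Ima(\sigma')$, so $y = \sigma'(x)$ for some $x \in \M \boxtimes \A$, and $\iota(\psi_\M(x)) = \psi_\A(y) = 0$ forces $x \in \Ker(\psi_\M)$ by the injectivity of $\iota$. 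The only conceptually delicate step is the identification $\psi_\A(\Ker(\tau)) = [[\A,\M]]$, which rests on the description of $\Ker(\tau)$ from Proposition~\ref{sequence} and the naturality $\psi_\A \circ \sigma' = \iota \circ \psi_\M$; everything else is a routine diagram chase.
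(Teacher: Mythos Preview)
Your proof is correct and follows essentially the same strategy as the paper: combine Proposition~\ref{sequence} with Theorem~\ref{T:awb_uce} and apply the Snake Lemma. The only organizational difference is that the paper applies the Snake Lemma to the diagram with rows $(\M\boxtimes\A)\rtimes(\A\boxtimes\M)\to\A\boxtimes\A\to\A/\M\boxtimes\A/\M$ and $0\to\M\to\A\to\A/\M\to 0$ and vertical maps $\psi$, $\psi_\A$, $\psi_{\A/\M}$, whereas you apply it to the ``transposed'' diagram with rows the two universal central extensions and vertical maps $\alpha$, $\tau$, $\pi$; both orientations extract the same exact sequence from the same commutative square, and your identification $\Ker(\tau)=\Ima(\sigma')$ is legitimate since $\Ima(\sigma')=\Ima(\sigma)$ (both are linearly spanned by $m\odot a$, $a\odot m$, $m\circledast a$, $a\circledast m$).
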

\begin{proof}
Due to Proposition~\ref{sequence} there is the following commutative diagram of \awbs with exact rows

\[ \xymatrix{
&({\M} \boxtimes {\A}) \rtimes ({\A} \boxtimes {\M}) \ar[r]^{\qquad \quad \sigma} \ar[d]^{\psi}& {\A} \boxtimes {\A} \ar[r]^{\tau \quad} \ar[d]^{\psi_{\A}}& {\A}/{\M} \boxtimes {\A}/{\M} \ar[r] \ar[d]^{\psi_{{\A}/{\M}}} & 0\\
0 \ar[r] & {\M} \ar[r] & {\A} \ar[r]^{\pi} & {\A}/{\M} \ar[r] & 0
}\]
where
\begin{align*}
 &\psi\big(m_1 \odot a_1, a_2 \odot m_2\big) = m_1 a_1 + a_2 m_2, \quad &&\psi\big(m_1 \odot a_1, a_2 \circledast m_2\big) = m_1 a_1 + [a_2,  m_2],\\
 &\psi\big(m_1 \circledast a_1, a_2 \odot m_2\big)= [m_1, a_1] + a_2 m_2, \quad  &&\psi\big(m_1 \circledast a_1, a_2 \circledast m_2\big) = [m_1, a_1] + [a_2, m_2]. 
\end{align*}

The Snake lemma provides the exact sequence 
\[
\Ker(\psi) \to \Ker(\psi_{\A}) \to \Ker(\psi_{{\A}/{\M}}) \to {\Coker}(\psi) \to 0.
\]
By Theorem~\ref{T:awb_uce} $\Ker(\psi_{\A})={\mathsf H}_1^{\awb}(\A)$, and since $\A/\M$ is a perfect \awb as well, we also have $\Ker(\psi_{\A/\M}) = {\mathsf H}_1^{\awb}(\A/\M)$.
Obviously ${\Coker}(\psi) = \frac{\M}{[[{\A},{\M}]]}$. Then the fact that there is a surjective map $\Ker(\psi_{\M}) \to  {\Ker}(\psi)$ completes the proof.
\end{proof}

%---------------------------------------------------------------------------------------

%----------------------------------------------------------------------------------------

\section*{Acknowledgments} 
This work was supported by Agencia Estatal de Investigaci\'on (Spain), grant PID2020-115155GB-I00. Emzar Khmaladze was
supported by Shota Rustaveli National Science Foundation of Georgia, grant FR-23-271. He is grateful to the University of Santiago de Compostela for its hospitality. Manuel Ladra was also funded by Xunta de Galicia, grant ED431C 2023/31 (European FEDER support included, UE).

\end{document}